\newtheorem{theorem}{Theorem}[section]
\newtheorem{lemma}[theorem]{Lemma}
\newtheorem{proposition}[theorem]{Proposition}
\newtheorem{corollary}[theorem]{Corollary}
\theoremstyle{definition}
\theoremstyle{definitions}
\newtheorem{definition}[theorem]{Definition}
\newtheorem{remark}[theorem]{Remark}
\newtheorem{example}[theorem]{Example}
\theoremstyle{notations}
\theoremstyle{remarks}
\newcommand{\N}{\mathbb{N}}
\newcommand{\sub}{\subseteq}
\newcommand{\lo}{\longrightarrow}
\newcommand{\wt}{\widetilde}
\newcommand{\vf}{\varphi}
\newcommand{\al}{\alpha}
\newcommand{\bt}{\beta}
\newcommand{\ti}{\tilde}
\newcommand{\psg}{\pi_1^{sg}(X,x)}
\newcommand{\psp}{\pi_1^{sp}(X,x)}
\newcommand{\pc}{p:\wt{X}\lo X}
\newcommand{\pst}{p_*\pi_1(\wt{X},\ti{x})}
\newcommand{\V}{\mathcal{V}}
\newcommand{\U}{\mathcal{U}}
\journal{???}
\begin{document}

\begin{frontmatter}

\title{On the Existence of Categorical Universal Coverings}

\author[]{Ali~Pakdaman\corref{cor1}}
\ead{a.pakdaman@gu.ac.ir}
\address{Department of Mathematics, Faculty of Sciences, Golestan University,
P.O.Box 155, Gorgan, Iran.}
\author[]{Hamid~Torabi}
\ead{hamid$_{-}$torabi86@yahoo.com}
\author[]{Behrooz~Mashayekhy}
\ead{bmashf@um.ac.ir}
\address{Department of Pure Mathematics,\\ Center of Excellence in Analysis on Algebraic Structures,\\ Ferdowsi University of Mashhad,\\
P.O.Box 1159-91775, Mashhad, Iran.}
\cortext[cor1]{Corresponding author}

\begin{abstract}
In this paper, we study necessary and sufficient conditions for the existence of categorical universal coverings using open covers of a given space $X$.
 As some applications, first we present a generalized version of the Shelah Theorem (Mycielski's conjecture: If $X$ is a Peano continuum, then $\pi_1(X,x)$ is
uncountable or $X$ has a simply connected universal covering) which states that a first countable Peano space has a categorical universal covering or has an uncountable fundamental group.
Second, we prove that the one point union $X_1\vee X_2=\frac{{X_1}\cup {X_2}}{{x_1}\sim {x_2}}$ has a categorical universal covering if and only if both $X_1$ and $X_2$ have categorical universal coverings.
\end{abstract}

\begin{keyword}
Categorical universal covering space\sep Open cover\sep Spanier group.
\MSC[2010]{57M10, 57M12, 54D05, 55Q05.}

\end{keyword}

\end{frontmatter}

\section{Introduction and motivation}
Unlike modern nomenclature, the term universal covering
space will always means a categorical covering space that is a covering
$p:\wt{X}\lo X$ with the property that for every covering $q:\wt{Y}\lo X$ with a path connected space $\wt{Y}$ there exists a unique (up to equivalence) covering $f:\wt{X}\lo\wt{Y}$ such that $q\circ f= p$.

Nowadays, lots of literatures can be found about the covering spaces and their relations with fundamental groups and almost all of them proceed on the classifying of covering spaces, using the universal covering spaces. Also, the important role of the universal covering spaces in the geometry causes that finding features on a given space in which guaranties the existence of the universal covering space can be a challenge.

Simply connected covering spaces are examples of universal coverings that have been studied more and partially sufficient. As can be seen in many textbooks, locally path connectedness and semi-locally simply connectedness of a given space $X$ is equivalent to the existence of simply connected universal covering spaces \cite{Lu,Sp}. But for the spaces that do not have these nice local behaviors, the existence of simply connected universal coverings is not possible. The question that naturally arises here is that: Can we provide conditions that ensure the existence of (categorical) universal coverings for spaces with bad local behaviors?

In this regard, the deep connection between fundamental groups and covering spaces becomes more evident. Recently, with the emergence of new subgroups of the fundamental group that will be born in the absence of semi-locally simply connectedness, studying the existence of universal coverings is more accessible. For example, the authors \cite{P2,T2,P5} have introduced universal covering spaces of some these locally complicated spaces.

Our first main result of this paper, Theorem \ref{T1}, introduces equivalent conditions, from various viewpoints, for the existence of universal coverings. The main idea is working with the Spanier groups with respect to open covers of a given space $X$ which have been introduced in \cite{Sp} and named in \cite{FRV}. The importance of these groups and their intersection which is named Spanier group, $\psp$, is studied by H. Fischer et al. in \cite{FRV} in order to modification of the definition of semi-locally simply connectedness. As a corollary of our first main result, Corollary \ref{C1}, we show that all the universal coverings are Spanier covering. A Spanier covering is a covering $p:\wt{X}\lo X$ with $\pst=\psp$ which is universal as we have shown in \cite{P5}.

Among the recent works on studying the universal coverings which we are aware, we can point out the followings:\\
\textbf{G.R. Conner and J.W. Lamoreaux} \cite{La}: They studied the existence of simply connected universal covering spaces for separable metric spaces and subsets of the Euclidean plane.\\
\textbf{J.W. Cannon and G.R. Conner} \cite{CC}: They studied the relation of the simply connected universal covering of a separable, connected, locally path connected, one-dimensional metric space with algebraic properties of its fundamental group.\\
 \textbf{C. Sormani and G. Wei} \cite{SW1,SW2}: They studied the existence of universal cover for Gromov-Hausdorff limit of a sequence of manifolds.\\
\textbf{ H. Fischer, A. Zastrow} \cite{FZ}: They introduced a generalized universal covering which enjoys
most of the usual properties, with the possible exception of evenly covered neighborhood.\\
\textbf{J. Wilkins} \cite{Wil}: He studied universal coverings of compact geodesic spaces. Here, it should be mentioned that although there are some overlaps between our results and his, but his approach to these result is by using discrete homotopy theory and concerns himself with compact geodesic spaces while we use continuous path counterparts and the result are more general.

Another result of this paper, Theorem \ref{T2} (which we owe its proof to \cite[Theorem 4.4]{CC}) says that countability of $\frac{\pi_1(X,x)}{\psp}$ guaranties existence of the universal covering for a connected, locally path connected first countable space $X$. We can consider this theorem as a generalization of Mycielski's conjecture \cite{My} that is proved by Shelah \cite{Sh} and Pawlikowski \cite{Paw}. In fact, Shelah used very sophisticated model theory results and proved the Mycielski conjecture which state that: \textit{Fundamental group of a compact metric space which is connected and locally path connected is either finitely generated or
has the power of the continuum.} Pawlikowski has a follow-up result which replaces the model theory by sophisticated constructive set theory. Using the paragraph preceding Lemma 2 in \cite{Paw}, we can restate this Theorem (conjecture) as follow: \textit{If $X$ is compact metric space which is connected and locally path connected and $\pi_1(X,x)$ is countable, then $X$ has simply connected universal covering.} As a consequence of Theorem \ref{T2}, we show that by deletion compact metric hypothesis from Shelah Theorem, we will just lose simply connectedness of universal covering.

Our last paseo in universal covering spaces is about one point union of two space. The Griffiths space is an example of the one point union of two spaces with simply connected universal coverings which hasn't got simply connected universal covering (Example \ref{E1}).
At first, we present a Seifert-van Kampen type theorem for the fundamental group of the one point union of two spaces and then, using it and Theorem \ref{T1}, we prove that $X_1\vee X_2$ has a universal covering if and only if both of $X_1$ and $X_2$ have universal coverings.

\section{Definitions and terminology}
Throughout this article, all the homotopies between two paths are relative to end points,
$X$ is a connected and locally path connected space with the base point $x\in X$, and $p:\wt{X}\lo X$ is a covering of $X$ with $\ti{x}\in p^{-1}(\{x\})$ as the base point of $\wt{X}$. For a space $X$ and any $H\leq\pi_1(X,x)$, by $\wt{X}_H$ we mean a covering space of $X$ such that $\pst=H$, where $\ti{x}\in p^{-1}(x)$ and $p:\wt{X}_H\longrightarrow X$ is the corresponding covering map.

 E.H. Spanier \cite[\S 2.5]{Sp} classified path connected covering spaces of a space $X$ using some subgroups of the fundamental group of $X$, recently named Spanier groups (see \cite{FRV}). If $\U$ is an open cover of $X$, then the subgroup of $\pi_1(X, x)$ consisting of all homotopy classes of loops that can be represented by a product of the following type $$\prod\limits_{j=1}^{n}\al_j*\bt_j*\al_j^{-1},$$
where the $\al_j$'s are arbitrary paths starting at the base point $x$ and each $\bt_j$ is a loop inside one of the neighborhoods $U_i\in\mathcal{U}$,
is called the \emph{Spanier group with respect to $\U$}, and denoted by $\pi(\U,x)$ \cite{FRV,Sp}. For two open covers $\U,\V$ of $X$, we say that $\V$ refines $\U$ if for every $V\in\V$, there exists $U\in\U$ such that $V\sub U$.
\begin{definition}
We say that an open cover $\U$ of a space $X$ is $\pi$-stable if $\pi(\U,x)=\pi(\V,x)$, for every refinement $\V$ of $\U$ and $x\in X$.
\end{definition}

\begin{definition}\cite{FRV}
The Spanier group of a topological space $X$, denoted by $\psp$ is $\psp=\bigcap\limits_{open\ covers\ \U}\pi(\U,x),$ for an $x\in X$.
\end{definition}
Also, we can obtain the Spanier groups as follows: Let $\U,\V$ be open coverings of $X$, and let $\U$ be
a refinement of $\V$. Then since $\pi(\U,x)\sub\pi(\V,x)$, there exists an inverse limit of these Spanier groups, defined via the directed
system of all open covers with respect to refinement and it is $\psp$ (\cite{FRV}).

In the next definition, we follow \cite{P5}:
\begin{definition}
$(i)$ A space $X$ is called Spanier space if $\pi_1(X,x)=\psp$, for $x\in X$.\\
$(ii)$ A covering $p:\wt{X}\lo X$ is called Spanier covering if $\wt{X}$ is a Spanier space.
\end{definition}
A desirable fact in the category of coverings of a space $X$ is the existence of $\wt{X}_H$, for every subgroup $H\leq\pi_1(X,x)$. We characterize spaces with this property as follows.
\begin{definition}
 We call a topological space $X$ a coverable space if $\wt{X}_H$ exists, for every subgroup $H\leq\pi_1(X,x)$ with $\psp\leq H$.
\end{definition}
Note that the above notion does not depend on the point $x$. Also, since the image subgroups of all the coverings contain $\psp$ (\cite{P5}), eliminating the condition $\psp\leq H$ from the above definition is meaningless.
\begin{definition}
$(i)$ A point $x\in X$ is called regular if $X$ is semi-locally simply connected at $x$.\\
$(ii)$ A non-regular point $x$ is called wild if for every open neighborhood $U$ of $x$ there is a loop $\al$ in $U$ such that $[\al]\notin\psp$\\
$(iii)$ A non-regular point is called tame if it is not wild.
\end{definition}
For example, the common point of shrinking circles in the Hawaiian Earring is a wild point and the common point of shrinking circles in the Harmonic Archipelago is a tame point.
\begin{remark}
The readers should compare the above definition with Definition 4.5 of \cite{Wil}. A little change in terminology make two definitions equivalent. In fact, by the results of \cite{P5}, it is an easy exercise to show that a loop $\al$ in $X$ belongs to $\psp$ if and only if it belongs to the image subgroup of every covering of $X$.
\end{remark}
The majority of basic algebraic topology books who study covering theory, introduce semi-locally simply connected spaces which are famous because of having simply connected universal covering. Precisely, as Cannon and Conner mentioned in \cite[Lemma 7.8]{CC}, existence of the covering space $\wt{X}_H$ of $X$ for $H\leq\pi_1(X,x)$ is equivalent to that every point $y\in X$ has an open neighborhood $U$ such that $i_*\pi_1(U,y)\leq H$, where $i:U\hookrightarrow X$ is the inclusion. This coincidence is seen in \cite{P2,T2,P5}, where the authors introduced three type of new categorical universal coverings: small covering, small generated covering and Spanier covering. Therein, the equivalent condition for the existence of these coverings are named, respectively: semi-locally small loop space, semi-locally small generated space and semi-locally Spanier space.
\begin{definition}
 We call a space $X$ a semi-locally Spanier space if and only if for each $x\in X$ there exists an open neighborhood $U$ of $x$ such that $i_*\pi_1(U,x)\leq\pi_1^{sp}(X,x)$.
\end{definition}
The following theorems are main results of the paper.
\begin{theorem}\label{T1}
For a connected and locally path connected space $X$, the following statements are equivalent.\\
$(i)$ $X$ is coverable.\\
$(ii)$ $X$ has a universal covering space.\\
$(iii)$ $X$ has a $\pi$-stable open cover.\\
$(iv)$ $X$ is a semi-locally Spanier space.\\
$(v)$ $X$ has no wild point.\\
$(vi)$ $\psp$ is an open subgroup of $\pi_1^{\tau}(X,x)$.
\end{theorem}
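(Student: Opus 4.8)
The plan is to route all six statements through a single hub condition, namely:
\[
(\ast)\quad\text{there exists an open cover }\U\text{ of }X\text{ with }\pi(\U,x)=\psp .
\]
I would first prove that each of the four ``local'' conditions $(iii),(iv),(v),(vi)$ is equivalent to $(\ast)$, and then close the argument with the two ``global'' covering-existence conditions by showing $(\ast)\Rightarrow(i)\Rightarrow(ii)\Rightarrow(\ast)$.

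For the local equivalences, $(iv)\Leftrightarrow(v)$ is just an unwinding of definitions: a point fails to be wild exactly when it has a neighborhood $U$ with $i_*\pi_1(U,x)\leq\psp$ (the regular case being subsumed, since there $i_*\pi_1(U,x)=1$), which is precisely the semi-locally Spanier condition at that point. For $(iv)\Rightarrow(\ast)$ I would take the cover $\U=\{U_y\}$ furnished by the semi-locally Spanier property; each generator $\al*\bt*\al^{-1}$ of $\pi(\U,x)$ then lies in $\psp$, so $\pi(\U,x)\leq\psp$, and the reverse inclusion is automatic from the definition of $\psp$. Conversely, if $\pi(\U,x)=\psp$ then every $y$ lies in some $U_i\in\U$ with $i_*\pi_1(U_i,y)\leq\pi(\U,x)=\psp$, giving $(iv)$. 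The equivalence $(iii)\Leftrightarrow(\ast)$ uses the directed-refinement description of $\psp$: if $\pi(\U,x)=\psp$ then for any refinement $\V$ we have $\psp\leq\pi(\V,x)\leq\pi(\U,x)=\psp$, so $\U$ is $\pi$-stable; conversely a $\pi$-stable cover satisfies $\pi(\U,x)=\psp$ by intersecting against common refinements of $\U$ with arbitrary covers. Finally $(vi)\Leftrightarrow(\ast)$ rests on the fact that the Spanier subgroups $\{\pi(\U,x)\}$ form a neighborhood basis at the identity of $\pi_1^\tau(X,x)$, so $\psp$ is open precisely when it contains, hence equals, some $\pi(\U,x)$.

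For the covering-existence half, the engine is the criterion of \cite[Lemma 7.8]{CC}: $\wt{X}_H$ exists iff every point $y$ has a neighborhood $U$ with $i_*\pi_1(U,y)\leq H$. Starting from $(\ast)$ this criterion is met for $H=\psp$ using the cover $\U$, and therefore for every $H$ with $\psp\leq H$, so $X$ is coverable, giving $(\ast)\Rightarrow(i)$. For $(i)\Rightarrow(ii)$ I would take $H=\psp$ to obtain $\wt{X}_{\psp}$; since every covering of $X$ has image subgroup containing $\psp$ \cite{P5}, the covering $\wt{X}_{\psp}$ admits a map to every other connected covering and is hence universal. The decisive step is $(ii)\Rightarrow(\ast)$: here I would first observe that for locally path connected $X$ every Spanier group $\pi(\U,x)$ is itself realizable as a covering $\wt{X}_{\pi(\U,x)}$, because the criterion above holds with $H=\pi(\U,x)$ (for $y\in U_i$ and any path $\al$ from $x$ to $y$, every loop $\bt$ in $U_i$ gives $\al*\bt*\al^{-1}\in\pi(\U,x)$). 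If $p:\wt{X}\lo X$ is universal with image $\pst$, then universality forces a map $\wt{X}\lo\wt{X}_{\pi(\U,x)}$ for every $\U$, whence $\pst\leq\pi(\U,x)$ for all $\U$ and thus $\pst\leq\psp$; combined with $\psp\leq\pst$ this yields $\pst=\psp$, so the evenly covered neighborhoods of $p$ witness $(\ast)$ (indeed $(iv)$).

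The step I expect to be the crux is $(ii)\Rightarrow(\ast)$, and within it the realizability of every $\pi(\U,x)$ as an honest covering: this is exactly what lets universality ``test'' the image of the universal covering against all Spanier subgroups simultaneously and force it down to $\psp$. The second delicate point is the topological input needed for $(vi)$, namely the identification of $\{\pi(\U,x)\}$ as a basis of identity-neighborhoods in $\pi_1^\tau(X,x)$; everything else reduces to formal manipulation of the refinement order together with routine basepoint-conjugation bookkeeping.
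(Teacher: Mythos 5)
Your proposal is correct and takes essentially the same approach as the paper: your hub condition $(\ast)$ is exactly the condition of the paper's Proposition \ref{p4}(ii) and Proposition \ref{p5}, and your local equivalences for $(iii),(iv),(v)$ reproduce those propositions' arguments. Your crux step $(ii)\Rightarrow(\ast)$ --- realizing each $\pi(\U,x)$ as a covering, using universality to force $\pst\leq\pi(\U,x)$ for all $\U$ and hence $\pst=\psp$ via Proposition \ref{p2}, then passing to the cover by evenly covered neighborhoods --- is precisely the paper's Theorem \ref{t5} combined with Lemma \ref{LS}, and like the paper you treat the topological-group input for $(vi)$ as an external citation to \cite{T3}.
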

After some topological criterions for the existence of universal coverings, an algebraic criteria is given as follows.
Also, this theorem is a generalization of Theorem 2.1 in \cite{La}.
\begin{theorem}\label{T2}
A connected, locally path connected and first countable space $X$ has a universal covering if $\frac{\pi_1(X,x)}{\psp}$ is countable. The converse is true when $X$ is separable metric.
\end{theorem}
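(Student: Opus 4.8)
The plan is to prove the two implications separately, in each case using Theorem \ref{T1} to replace the phrase ``has a universal covering'' by a more usable equivalent. For the forward implication (countability $\Rightarrow$ existence) I would argue by contraposition, exploiting the equivalence of conditions $(ii)$ and $(v)$: if $X$ has no universal covering then $X$ has a wild point, and I would show this forces $\pi_1(X,x)/\psp$ to have cardinality at least that of the continuum. For the converse I would use conditions $(i)$ and $(iv)$ together with separability to bound the fibre of the Spanier covering.

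For the forward direction, assume $X$ has no universal covering, so by Theorem \ref{T1} there is a wild point $w$. Using first countability and local path connectedness, fix a decreasing basis $U_1\supseteq U_2\supseteq\cdots$ of path connected open neighbourhoods of $w$ together with a path $\la$ from $x$ to $w$. For each $n$, wildness supplies a loop in $U_n$ whose class lies off $\psp$; re-basing it at $w$ inside the path connected set $U_n$ (this does not affect membership in $\psp$, since $\psp$, being an intersection of the subgroups $\pi(\U,x)$ that are generated by conjugates of small loops, is normal) yields a loop $\gamma_n\sub U_n$ based at $w$ with $[\la*\gamma_n*\la^{-1}]\notin\psp$. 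Because the $U_n$ shrink to $w$, for every $S\sub\N$ the infinite concatenation $\gamma_S$ of the $\gamma_n$ with $n\in S$, traversed on consecutive shrinking subintervals with the accumulation point mapping to $w$, is a genuine (continuous) loop at $w$. The plan is then to show that $S\mapsto[\la*\gamma_S*\la^{-1}]\,\psp$ carries an uncountable subfamily of $2^{\N}$ to distinct cosets, whence $\pi_1(X,x)/\psp$ is uncountable.

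The hard part — indeed the main obstacle — is exactly this injectivity. The difficulty is that $\psp=\bigcap_{\U}\pi(\U,x)$ is an intersection over \emph{all} open covers, while any single cover $\U$ detects only finitely many of the $\gamma_n$: since $\gamma_n\sub U_n$ and $U_n\to w$, all but finitely many $\gamma_n$ are $\U$-small and so are absorbed into $\pi(\U,x)$. Hence no fixed covering can separate two subsets differing only at large indices, and one cannot hope to distinguish cosets with a single cover. Following \cite[Theorem 4.4]{CC}, the remedy is to choose the loops $\gamma_n$ simultaneously with witnessing covers $\V_n$ satisfying $[\la*\gamma_n*\la^{-1}]\notin\pi(\V_n,x)$, inductively arranging an independence condition that lets one read off the $n$-th coordinate of $S$ modulo $\psp$. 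Distinctness of $\gamma_S$ and $\gamma_{S'}$ then reduces to the statement that their difference escapes $\pi(\V_n,x)$ for some $n$ in the symmetric difference of $S$ and $S'$; since $\psp\sub\pi(\V_n,x)$, escaping $\pi(\V_n,x)$ forces escaping $\psp$. Carrying out this diagonal selection so that uncountably many infinite words stay distinct modulo $\psp$ is the technical heart of the argument.

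For the converse, suppose $X$ is separable metric and has a universal covering. By Theorem \ref{T1} the space is coverable, so the covering $p:\wt{X}_{\psp}\lo X$ exists; as $\psp$ is normal, this covering is regular, its deck group is isomorphic to $\pi_1(X,x)/\psp$, and that group acts freely and transitively on the fibre $p^{-1}(x)$, giving $|\pi_1(X,x)/\psp|=|p^{-1}(x)|$. It then remains to see that the fibre is countable, and this is where separability enters: I would invoke that a connected covering space of a separable, connected, locally path connected metric space is again separable metric (the technical input imported in the spirit of \cite{La}). Since $p^{-1}(x)$ is a discrete subspace of the separable metric space $\wt{X}_{\psp}$, and a discrete subspace of a separable metric space is countable (such a space is second countable, hence hereditarily Lindel\"of), the fibre is countable, completing the converse.
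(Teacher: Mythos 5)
Your converse runs essentially parallel to the paper's: both identify the fibre of $\wt{X}_{\psp}$ over $x$ with $\pi_1(X,x)/\psp$, both import from \cite{La} that the covering space is separable metric, and both conclude countability of the fibre from discreteness (the paper phrases this as ``an uncountable subset of a separable metric space has a limit point in itself''). The forward direction is where you and the paper genuinely diverge, and it is also where your proposal has a real gap. The paper argues directly: with $B_1\supseteq B_2\supseteq\cdots$ a local basis at $y$ and $G_n$ the image of $\pi_1(B_n,y)$ in $\pi_1(X,y)$, it invokes \cite[Theorem 4.4]{CC} \emph{as a statement} to get that countability of the quotient forces the sequence $G_n\pi_1^{sp}(X,y)$ to stabilize, and then a short refinement trick (replace an arbitrary cover $\U$ by $\U\cup\{B_m\}$, where $B_m$ sits inside the member of $\U$ containing $y$) shows the stable group lies in $\pi_1^{sp}(X,y)$; hence $X$ is semi-locally Spanier and Theorem \ref{T1} finishes. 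You instead argue by contraposition through condition $(v)$ of Theorem \ref{T1}. That route is viable, but it concentrates the entire difficulty of the theorem into the injectivity-mod-$\psp$ of $S\mapsto[\la*\gamma_S*\la^{-1}]$, and you explicitly leave that step unexecuted (``the technical heart''). A plan that defers its heart is not yet a proof: as written, the uncountability is asserted, not established, and you cannot simply cite \cite[Theorem 4.4]{CC} as stated either, because its hypothesis is a \emph{non-stabilizing} chain $G_n\psp$, and the bridge ``wild point $\Rightarrow$ $G_n\psp$ never stabilizes'' (which is exactly the paper's refinement trick read contrapositively) is also absent from your proposal.

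For the record, the missing construction can be carried out along the lines you gesture at: choose inductively indices $n_1<n_2<\cdots$, loops $\gamma_k$ in $U_{n_k}$ based at $w$ with $[\la*\gamma_k*\la^{-1}]\notin\psp$, witnessing covers $\V_k$ with $[\la*\gamma_k*\la^{-1}]\notin\pi(\V_k,x)$, and arrange at each stage that $U_{n_{k+1}}$ lies inside a single member of $\V_k$ containing $w$. Then for $S\neq S'$ with \emph{least} differing index $k$, both tails beyond $k$ are loops inside one member of $\V_k$, hence their conjugates by the access path lie in the normal subgroup $\pi(\V_k,x)$; the common prefix is a loop at $w$, so in $\pi_1(X,x)/\pi(\V_k,x)$ the quotient of the two classes is a conjugate of the nontrivial image of $[\la*\gamma_k*\la^{-1}]$, whence the classes differ modulo $\pi(\V_k,x)\supseteq\psp$. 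This is precisely the ``independence condition'' you allude to but never pin down: the point is not merely to pick witnessing covers $\V_n$, but to interleave their choice with the choice of the next neighborhood so that each $\V_n$ absorbs all later letters while still rejecting $\gamma_n$. Until that (or the bridge lemma above, which would let you quote \cite[Theorem 4.4]{CC} directly, as the paper does) is supplied, the forward implication stands unproved.
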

Proposition \ref{p6}, the main technical result of the paper, is a Seifert-van Kampen type theorem for the fundamental group of the one point union $X_1\vee X_2$. Using this and Theorem \ref{T1}, we prove that
\begin{theorem}\label{T3}
Let $X$ be the one point union $X_1\vee X_2=\frac{{X_1}\cup {X_2}}{{x_1}\sim {x_2}}$, where $\{x_1\}$ and $\{x_2\}$ are closed in $X_1$ and $X_2$, respectively. Then $X$ has a universal covering if and only if $X_1$ and $X_2$ admit universal coverings.
\end{theorem}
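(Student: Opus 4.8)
The plan is to reduce the whole statement, via Theorem~\ref{T1}, to the purely local condition that the space has no wild point (equivalently, that it is semi-locally Spanier, condition~$(iv)$). Thus the goal becomes: $X=X_1\vee X_2$ has no wild point if and only if neither $X_1$ nor $X_2$ does. Throughout I write $x_0$ for the wedge point $[x_1]=[x_2]$, and for each factor I use two maps: the inclusion $\iota_i\colon X_i\hookrightarrow X$ and the retraction $r_i\colon X\to X_i$ which is the identity on $X_i$ and collapses the other factor to $x_0$. The retraction is continuous for the quotient topology without any extra hypothesis, while the assumption that $\{x_i\}$ is closed guarantees that the image of $X_i\setminus\{x_i\}$ is an open subset of $X$ homeomorphic to $X_i\setminus\{x_i\}$; hence, away from $x_0$, the space $X$ is locally indistinguishable from $X_1$ or $X_2$.

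The first step is a functorial comparison of Spanier groups: for any continuous $f\colon A\to B$ one has $f_*\pi_1^{sp}(A)\subseteq\pi_1^{sp}(B)$, because the preimage of an open cover of $B$ is an open cover of $A$ and $f$ carries a loop small in $A$ to a loop small in $B$. Applying this to $\iota_i$ and $r_i$ and using $r_i\circ\iota_i=\mathrm{id}$ yields both $\iota_{i*}\pi_1^{sp}(X_i,x_i)\subseteq\psp$ and $r_{i*}\psp\subseteq\pi_1^{sp}(X_i,x_i)$, and therefore the clean identity $\iota_{i*}^{-1}(\psp)=\pi_1^{sp}(X_i,x_i)$: a loop lying in the factor $X_i$ is Spanier in $X$ exactly when it is Spanier in $X_i$. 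This identity disposes of every non-wedge point in both directions and of the wedge point in the ``only if'' direction. For a non-wedge point one argues through the local homeomorphism above and transports the witnessing loops across $\iota_i$; the retraction also shows that a loop in $X_i$ is null-homotopic in $X$ iff it is so in $X_i$, so the regularity side of the dichotomy transfers too. At the wedge point in the ``only if'' direction, if, say, $x_1$ is wild in $X_1$, then every neighborhood of $x_1$ carries a loop outside $\pi_1^{sp}(X_1,x_1)$; such loops sit inside arbitrarily small neighborhoods of $x_0$ and, by the identity, lie outside $\psp$, so $x_0$ is wild in $X$. Hence if $X$ is coverable then so are both factors.

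It remains to treat the ``if'' direction at the wedge point, which is the crux. Assuming each $x_i$ is not wild in $X_i$, I use the semi-locally Spanier form of the hypothesis to choose open $U_i\ni x_i$ for which the inclusion $U_i\hookrightarrow X_i$ sends $\pi_1(U_i,x_i)$ into $\pi_1^{sp}(X_i,x_i)$, and I set $W=U_1\vee U_2$; pushing forward by $\iota_i$, every loop lying entirely in one of the $U_i$ represents an element of $\psp$. If a loop $\alpha$ in $W$ meets $x_0$ only finitely often, then it factors as a finite concatenation of such one-factor loops, whence $[\alpha]\in\psp$ because $\psp$ is a subgroup. The genuine difficulty---and the reason the Seifert--van Kampen theorem, Proposition~\ref{p6}, is needed---is the loop that returns to $x_0$ infinitely often, oscillating between the two factors as in the Griffiths space; such an $\alpha$ is \emph{not} a finite product of one-factor loops, so its membership in the finitely generated group $\pi(\U,x_0)$ for each open cover $\U$ cannot be read off directly. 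Here I will invoke Proposition~\ref{p6} to express $[\alpha]$ in terms of the fundamental groups of the factors and thereby reduce its Spanier membership in $X$ to the already-established inclusions $\iota_{i*}\pi_1(U_i,x_i)\subseteq\psp$. This infinite-oscillation case is the main obstacle, and it is precisely the phenomenon that makes the naive free-product description of $\pi_1(X_1\vee X_2)$ inadequate for locally complicated spaces.
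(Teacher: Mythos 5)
Your reduction via Theorem \ref{T1} to the semi-locally Spanier condition, your functoriality lemma $f_*\pi_1^{sp}(A)\subseteq\pi_1^{sp}(B)$, and your treatment of the non-wedge points and of the ``only if'' direction through the retractions $r_i$ are all correct, and run parallel to the paper (the paper's converse is exactly your functoriality argument applied to $r_1$, with an open cover of $X_1$ pulled back along it). The gap is at the one place you yourself flag as the crux: the loops at the wedge point that oscillate infinitely between the factors. You leave this step as ``I will invoke Proposition \ref{p6}'' and describe the intended reduction as expressing $[\alpha]$ ``in terms of the fundamental groups of the factors,'' to be finished off by the inclusions $\iota_{i*}\pi_1(U_i,x_i)\subseteq\pi_1^{sp}(X,x_0)$. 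But Proposition \ref{p6} does not, and cannot, provide such an expression: its conclusion necessarily contains the third generating set $k_*\pi_1(U_1'\vee U_2',x_0)$ coming from a wedge of neighborhoods, and that set is precisely where the infinitely oscillating loops live (the Griffiths space shows this third generator can never be absorbed into the two factor subgroups). So the reduction as you describe it is circular: membership of the oscillating loops in $\pi_1^{sp}(X,x_0)$ is exactly what remains unproved after Proposition \ref{p6} is applied.

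The missing idea --- and it is how the paper closes the argument --- is to make the application of Proposition \ref{p6} depend on the open cover. Fix an arbitrary open cover $\U$ of $X$ and choose $W'\in\U$ containing $x_0$; since $W'$ is open in the quotient, it contains a wedge neighborhood $U_1'\vee U_2'$ with $U_i'\subseteq U_i$. Now apply Proposition \ref{p6} to $U_1\vee U_2$ with these sub-neighborhoods: every loop in $U_1\vee U_2$, however often it oscillates, is a product of loops in $U_1$, loops in $U_2$, and loops lying in $U_1'\vee U_2'\subseteq W'$. The first two kinds land in $\pi_1^{sp}(X,x_0)\leq\pi(\U,x_0)$ by your established inclusions; the third kind lands in $\pi(\U,x_0)$ \emph{tautologically}, by the very definition of the Spanier group with respect to $\U$ (Lemma \ref{l2}(i)), because each such loop sits inside the single element $W'$ of $\U$ --- no decomposition into factor loops is needed, or possible. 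Hence $\pi_1(U_1\vee U_2,x_0)$ maps into $\pi(\U,x_0)$, and since $\U$ was arbitrary, into $\pi_1^{sp}(X,x_0)$, which is what semi-local Spanierness at $x_0$ requires. A small unrelated slip: $\pi(\U,x_0)$ is not finitely generated in general, though nothing in your argument rests on that phrase.
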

\section{Propositions and proofs of the main results}
Although, the Spanier's brilliant book is the only book (as far as we know) that studies the existence of covering spaces from open cover viewpoint,
but some delicacies in this approach are evident and this caused that the influence of his book in new research becomes more. Therein, the main theorem is
\begin{theorem}(\cite[\S 2.5 Theorems 12,13]{Sp})\label{TS}
Let $X$ be a connected, locally path connected space and $H\leq\pi_1(X,x)$, for $x\in X$. Then there exists a covering $\pc$ such that $\pst=H$ if and only if there exists an open cover $\U$ of $X$ in which $\pi(\U,x)\leq H$.
\end{theorem}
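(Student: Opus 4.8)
The plan is to prove the two implications separately, with the construction of the covering in the sufficiency direction carrying nearly all of the work. For necessity, suppose a covering $\pc$ with $\pst=H$ is given. First I would cover $X$ by evenly covered open sets, using local path connectedness to arrange that each member is path connected; call this cover $\U$. To show $\pi(\U,x)\sub H$ it suffices, by the subgroup property, to check a generator $[\al*\bt*\al^{-1}]$, where $\al$ is a path from $x$ and $\bt$ is a loop inside some $U_i\in\U$. I would lift $\al*\bt*\al^{-1}$ starting at $\ti{x}$: the lift of $\al$ terminates at some $\ti{y}\in p^{-1}(\al(1))$, and since $U_i$ is evenly covered the lift of $\bt$ beginning at $\ti{y}$ stays inside the sheet over $U_i$ and is therefore again a loop at $\ti{y}$; the lift of $\al^{-1}$ then returns to $\ti{x}$. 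Hence the total lift is a loop, so $[\al*\bt*\al^{-1}]\in\pst=H$, which gives $\pi(\U,x)\sub H$.

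For sufficiency, assume $\U$ is an open cover with $\pi(\U,x)\sub H$ and build $\wt{X}$ by the Spanier path-class construction. As a set, let $\wt{X}$ consist of equivalence classes of paths in $X$ starting at $x$, where $\gamma_1\sim\gamma_2$ iff $\gamma_1(1)=\gamma_2(1)$ and $[\gamma_1*\gamma_2^{-1}]\in H$; set $p([\gamma])=\gamma(1)$ and $\ti{x}=[c_x]$ for the constant path $c_x$. For the topology I would declare, for each path-connected open $W$ contained in some member of $\U$ and each class $[\gamma]$ with $\gamma(1)\in W$, the set $([\gamma],W)=\{[\gamma*\delta]:\delta\text{ a path in }W\text{ from }\gamma(1)\}$, and take these as a basis. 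Local path connectedness ensures such $W$'s form a basis of $X$, so the sets $([\gamma],W)$ genuinely cover $\wt{X}$ and generate a topology.

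The central step is to verify that each $([\gamma],W)$ maps homeomorphically onto $W$, and this is exactly where the hypothesis enters. Surjectivity onto $W$ follows from path connectedness of $W$. For injectivity, given paths $\delta_1,\delta_2$ in $W$ from $\gamma(1)$ with a common endpoint, I would observe that $\delta_1*\delta_2^{-1}$ is a loop inside a member of $\U$, so that $\gamma*(\delta_1*\delta_2^{-1})*\gamma^{-1}$ is a generator of $\pi(\U,x)\sub H$; hence $[\gamma*\delta_1]=[\gamma*\delta_2]$. Continuity and openness of $p$ on these basic sets are then routine, and two basic sets over the same $W$ either coincide or are disjoint, exhibiting $W$ as evenly covered. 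Path connectedness of $\wt{X}$ follows because the lift $t\mapsto[\gamma_t]$ of any $\gamma$, with $\gamma_t$ denoting $\gamma$ restricted to $[0,t]$, joins $\ti{x}$ to $[\gamma]$. Finally, to identify the image I would note that a loop $\eta$ at $x$ lifts to the path $t\mapsto[\eta_t]$, which is a loop at $\ti{x}$ precisely when $[\eta]=[c_x]$ in $\wt{X}$, i.e. when $[\eta]\in H$; therefore $\pst=H$.

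The hard part is the local triviality in the sufficiency direction: verifying the evenly covered property, and it is precisely in establishing injectivity of $p$ on a basic sheet that the containment $\pi(\U,x)\sub H$ is indispensable. The remaining verifications — that the proposed basis is legitimate, that $p$ is continuous and open, and the computation $\pst=H$ — are standard once this injectivity is secured.
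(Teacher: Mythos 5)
Your proof is correct and follows essentially the same route as the source the paper cites for this statement (Spanier, \S 2.5): the necessity half is exactly Spanier's Lemma 11, which the paper itself later quotes as Lemma \ref{LS} (lifting each generator $\al*\bt*\al^{-1}$ across an evenly covered cover), and the sufficiency half is Spanier's path-class construction with the topology generated by the sets $([\gamma],W)$, where the containment $\pi(\U,x)\leq H$ enters precisely in the injectivity of $p$ on basic sheets, as you correctly isolate. Since the paper offers no independent proof and your argument reconstructs the standard one it relies on, there is nothing further to flag.
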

For two open covers $\U,\V$ of $X$, we say that $\V$ refines $\U$ if for every $V\in\V$, there exists $U\in\U$ such that $V\sub U$.
Using the properties of open covers and the definition of the Spanier groups with respect to open covers, we have the following facts which have been also remarked in \cite{Sp}.
\begin{proposition}\label{ps}
Let $\U,\V$ be open covers of a space $X$. Then the following statements hold.\\
(i) If $\V$ refines $\U$, then $\pi(\V,x)\sub\pi(\U,x)$, for every $x\in X$.\\
(ii) $\pi(\U,x)$ is a normal subgroup of $\pi_1(X,x)$.\\
(iii) If $\al$ is a path in $X$, then $\vf_{[\al]}(\pi(\U,\al(0)))=\pi(\U,\al(1))$, where $\vf_{[\al]}([\bt])=[\al^{-1}*\bt*\al]$.
\end{proposition}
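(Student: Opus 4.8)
The plan is to reduce all three assertions to statements about the distinguished generating set of $\pi(\U,x)$, namely the ``lasso'' classes $[\al*\bt*\al^{-1}]$ with $\al$ an arbitrary path issuing from the base point $x$ and $\bt$ a loop, based at $\al(1)$, contained in some member of $\U$. Since $\pi(\U,x)$ is by definition the subgroup generated by these elements, and each operation in question (the inclusion induced by refinement, conjugation, and change of base point) is compatible with products and inverses, it will suffice in each case to track what happens to a single generator and then invoke the universal property of the generated subgroup.

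For part (i), I would take a generator $[\al*\bt*\al^{-1}]$ of $\pi(\V,x)$, so that $\bt$ lies in some $V\in\V$. Because $\V$ refines $\U$, there is $U\in\U$ with $V\sub U$, hence $\bt$ is also a loop in $U$; therefore the same class is a generator of $\pi(\U,x)$. As the generators of the smaller group are among the generators of the larger, the inclusion $\pi(\V,x)\sub\pi(\U,x)$ follows. For part (ii), I would fix $[\gamma]\in\pi_1(X,x)$ and a generator $[\al*\bt*\al^{-1}]$ and compute the conjugate $[\gamma*\al*\bt*\al^{-1}*\gamma^{-1}]=[(\gamma*\al)*\bt*(\gamma*\al)^{-1}]$. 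Here $\gamma*\al$ is again a path starting at $x$ and ending at $\al(1)$, while $\bt$ is unchanged and still lies in a member of $\U$, so the conjugate is again a lasso generator. Thus conjugation by $[\gamma]$ carries the generating set into $\pi(\U,x)$; since it is an automorphism of $\pi_1(X,x)$, with inverse the (same-form) conjugation by $[\gamma]^{-1}$, it maps $\pi(\U,x)$ onto itself, which gives normality.

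Part (iii) is essentially the same manipulation with the base point moved, and is where one must be most careful with the directions of concatenation. Writing $a=\al(0)$ and $b=\al(1)$, I would apply $\vf_{[\al]}$ to a generator $[\delta*\bt*\delta^{-1}]$ of $\pi(\U,a)$ to obtain $[\al^{-1}*\delta*\bt*\delta^{-1}*\al]=[(\al^{-1}*\delta)*\bt*(\al^{-1}*\delta)^{-1}]$; the new stem $\al^{-1}*\delta$ starts at $b$ and ends at $\delta(1)$, and $\bt$ still lies in the same member of $\U$, so this is a lasso generator based at $b$. This shows $\vf_{[\al]}(\pi(\U,a))\sub\pi(\U,b)$. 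The reverse inclusion follows by applying the inverse isomorphism $\vf_{[\al^{-1}]}$ to a generator of $\pi(\U,b)$, or equivalently by running the same argument backwards to send the generating set of $\pi(\U,b)$ into $\pi(\U,a)$; combining the two gives the claimed equality. The only genuine obstacle anywhere is the routine bookkeeping of base points and of the start and end points of the concatenated paths; there is no deeper difficulty, since the whole proposition is a formal consequence of the generating description of $\pi(\U,x)$.
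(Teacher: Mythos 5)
Your proposal is correct, and it takes the only natural route: the paper itself gives no proof of this proposition, stating that it follows from ``the properties of open covers and the definition of the Spanier groups'' (and citing Spanier's book), which is precisely the generator-tracking argument you carry out. Your verification that refinement, conjugation by $[\gamma]$, and the base-point-change isomorphism $\vf_{[\al]}$ each send lasso generators to lasso generators, together with the two-sided inclusions for normality and for the equality in (iii), correctly supplies the routine details the paper leaves implicit.
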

As the first observation, we have
\begin{proposition}\label{p1}
For a connected and locally path connected space $X$, let $H,K\leq\pi_1(X,x)$. Then $\wt{X}_H$ and $\wt{X}_K$ exist if and only if $\wt{X}_{H\cap K}$ exists.
\end{proposition}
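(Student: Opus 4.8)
The plan is to use Theorem \ref{TS} (Spanier's criterion) as the bridge that translates the existence of a covering $\wt{X}_H$ into the existence of an open cover $\U$ of $X$ with $\pi(\U,x)\leq H$, and then to exploit the behaviour of Spanier groups under refinement recorded in Proposition \ref{ps}(i). The key observation is that the common refinement of two open covers produces a Spanier group lying inside the intersection of the two original Spanier groups, which is exactly what is needed to realize $H\cap K$.

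First I would treat the forward implication. Assuming $\wt{X}_H$ and $\wt{X}_K$ exist, Theorem \ref{TS} supplies open covers $\U$ and $\V$ of $X$ with $\pi(\U,x)\leq H$ and $\pi(\V,x)\leq K$. I would then form the common refinement $\mathcal{W}=\{U\cap V\mid U\in\U,\ V\in\V\}$, which is an open cover of $X$ (every point lies in some $U$ and some $V$, hence in their intersection) and which refines both $\U$ and $\V$. By Proposition \ref{ps}(i) this gives $\pi(\mathcal{W},x)\leq\pi(\U,x)\leq H$ and $\pi(\mathcal{W},x)\leq\pi(\V,x)\leq K$, so that $\pi(\mathcal{W},x)\leq H\cap K$. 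A second application of Theorem \ref{TS}, now to the subgroup $H\cap K$ together with the cover $\mathcal{W}$, produces $\wt{X}_{H\cap K}$.

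For the converse, which is the easier direction, I would start from the existence of $\wt{X}_{H\cap K}$. Theorem \ref{TS} yields an open cover $\U$ with $\pi(\U,x)\leq H\cap K$. Since $H\cap K\leq H$ and $H\cap K\leq K$, the same cover $\U$ already satisfies $\pi(\U,x)\leq H$ and $\pi(\U,x)\leq K$, and applying Theorem \ref{TS} once more in each case gives $\wt{X}_H$ and $\wt{X}_K$.

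I do not expect any serious obstacle here: the whole argument is an application of Spanier's criterion in both directions, with the single substantive step being the passage to the common refinement. The only point requiring (routine) care is checking that $\mathcal{W}$ is genuinely an open cover refining both $\U$ and $\V$, so that Proposition \ref{ps}(i) applies; once this is in hand the containments chain together immediately and both implications close.
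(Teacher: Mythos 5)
Your proof is correct and follows essentially the same route as the paper: both directions rest on Spanier's criterion (Theorem \ref{TS}), with the forward implication handled by the common refinement $\{U\cap V \mid U\in\U,\ V\in\V\}$ and Proposition \ref{ps}(i), exactly as in the paper's argument. Your converse merely spells out what the paper dismisses as trivial, and it does so in the intended way.
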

\begin{proof}
By Theorem \ref{TS}, existence of $\wt{X}_H$ and $\wt{X}_K$ implies the existence of open covers $\U$ and $\V$ of $X$ such that $\pi(\U,x)\leq H$ and $\pi(\V,x)\leq K$. Let $\U\cap\V=\{U\cap V|U\in\U,\ V\in\V\}$ which is a refinement of $\U$ and $\V$. Hence $\pi(\U\cap\V)\sub\pi(\U)\sub H$ and $\pi(\U\cap\V)\sub\pi(\V)\sub K$ which implies that $\pi(\U\cap\V)\sub H\cap K$. Therefore, there exists $\wt{X}_{H\cap K}$. The converse is trivial.
\end{proof}
The above theorem shows that intersections of open covers of a space $X$ are important in the existence of new coverings of $X$. So it is interesting to find the impress of the intersection of all open covers. For this, we use the Spanier groups.
\begin{proposition}\label{p2} (\cite{P5}).
If $\pc$ is a covering of $X$, then $\psp\leq\pst$, for every $x\in X$.
\end{proposition}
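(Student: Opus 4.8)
\emph{Proof proposal.} The plan is to deduce this directly from Spanier's classification, Theorem \ref{TS}. The whole point of the statement is that the smallest of all the Spanier groups, namely $\psp=\bigcap_{\V}\pi(\V,x)$, is already trapped inside the image of an \emph{arbitrary} covering, and the mechanism that manufactures a suitable open cover witnessing this is exactly the forward implication of Theorem \ref{TS}.

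First I would fix $x\in X$, choose a lift $\ti{x}\in p^{-1}(x)$, and set $H:=\pst$. The given covering $\pc$ itself realizes $H$ as an image subgroup, so the hypothesis of the ``only if'' direction of Theorem \ref{TS} is satisfied with this particular $H$. Invoking that direction produces an open cover $\U$ of $X$ with $\pi(\U,x)\leq H$. Since $\psp$ is by definition the intersection of $\pi(\V,x)$ over all open covers $\V$ of $X$, it is in particular contained in this one $\pi(\U,x)$, whence $\psp\leq\pi(\U,x)\leq H=\pst$. That is the entire argument.

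Because the statement is asserted for every $x\in X$, the remaining (routine) matter is base-point independence. Given a path $\al$ joining $x$ to another point, Proposition \ref{ps}(iii) shows that the Spanier groups correspond under the conjugation isomorphism $\vf_{[\al]}$, and the image subgroups of $p$ transform in the same way once one fixes the lift of $\al$; so either one verifies the inclusion at a single base point and transports it, or one simply repeats the displayed argument verbatim at each $x$ with an appropriate lift.

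I do not expect a genuine obstacle here, since the inclusion is an immediate consequence of the classification. The one subtlety worth flagging is whether Theorem \ref{TS} may be applied to the given $\wt{X}$ without assuming it is path connected. As $X$ is locally path connected, so is $\wt{X}$; its path components are therefore open, and the path component of $\ti{x}$ is itself a covering of $X$ with the same image subgroup $\pst$. Restricting to that component reduces everything to the path-connected case to which Spanier's theorem directly applies, which is presumably the reduction already recorded in \cite{P5}.
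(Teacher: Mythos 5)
Your proof is correct and takes essentially the same route as the paper's: the paper cites \cite{P5} for this proposition, where the inclusion is obtained exactly as you obtain it, by extracting from the given covering an open cover $\U$ of $X$ with $\pi(\U,x)\leq\pst$ (concretely, the cover by evenly covered neighborhoods, which is Lemma \ref{LS} in the paper) and then noting $\psp\leq\pi(\U,x)$ by definition of the intersection. Your appeal to the forward direction of Theorem \ref{TS} is just a packaged form of that same mechanism, and your reduction to the path component of $\ti{x}$ correctly disposes of the path-connectedness hypothesis.
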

It should be mentioned that the above proposition holds for $\pi_1^s(X,x)$ and $\psg$ (because of the inclusions $\pi_1^s(X,x)\leq\psg\leq\psp$) which their role in covering theory is studied in \cite{P2,T2}.

With a little change in terminology, the following result is well-known in the classical covering theory.
\begin{corollary}
Every connected, locally path connected and semi-locally simply connected space is coverable.
\end{corollary}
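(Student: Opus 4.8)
The plan is to verify the definition of coverability directly, by producing for each admissible subgroup $H$ an open cover whose Spanier group is contained in $H$, and then quoting Spanier's Theorem \ref{TS}. First I would use the hypotheses to build a suitable open cover. By semi-local simple connectedness, every point $y\in X$ has an open neighborhood $U_y$ for which the inclusion-induced homomorphism $i_*:\pi_1(U_y,y)\lo\pi_1(X,y)$ is trivial. Since $X$ is locally path connected, I may shrink each $U_y$ to a path-connected open set without destroying this triviality, and on a path-connected neighborhood the vanishing of $i_*$ does not depend on the chosen base point. Setting $\U=\{U_y : y\in X\}$ gives an open cover of $X$.

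Next I would compute $\pi(\U,x)$. A typical generator of $\pi(\U,x)$ has the form $[\alpha*\beta*\alpha^{-1}]$, where $\alpha$ starts at $x$ and $\beta$ is a loop lying entirely inside some $U_i\in\U$. Because the inclusion $U_i\hookrightarrow X$ kills $\pi_1$, the loop $\beta$ is nullhomotopic in $X$, hence so is $\alpha*\beta*\alpha^{-1}$. Thus every generator is trivial and $\pi(\U,x)=1$. Consequently, for any subgroup $H\leq\pi_1(X,x)$—in particular for any $H$ with $\psp\leq H$—we have $\pi(\U,x)=1\leq H$, so Theorem \ref{TS} yields a covering $\wt{X}_H$ with $\pst=H$. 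Therefore $X$ is coverable. (As an even shorter route, the same computation shows $i_*\pi_1(U_y,y)=1\leq\psp$, i.e. $X$ is semi-locally Spanier, so condition $(iv)\Rightarrow(i)$ of Theorem \ref{T1} gives coverability at once.)

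The only point requiring any care is the base-point issue in the construction of $\U$: one must know that loops based at an arbitrary point of a chosen neighborhood—and not merely at the distinguished base point—are nullhomotopic in $X$. This is precisely where local path connectedness enters, allowing the passage to path-connected neighborhoods on which triviality of $i_*$ is base-point independent. Beyond this mild subtlety the argument is routine, and I do not anticipate a genuine obstacle.
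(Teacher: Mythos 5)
Your proof is correct. The paper offers no argument for this corollary at all---it simply declares it ``well-known in the classical covering theory''---and what you have written (shrink the semi-locally-simply-connected neighborhoods to path-connected ones using local path connectedness, observe that every generator $[\alpha*\beta*\alpha^{-1}]$ of $\pi(\U,x)$ dies because $\beta$ is nullhomotopic in $X$, hence $\pi(\U,x)=1\leq H$, then apply Theorem \ref{TS}) is precisely the standard classical argument being alluded to, with the base-point subtlety correctly identified and correctly resolved; your alternative shortcut via Theorem \ref{T1}$(iv)\Rightarrow(i)$ is equally valid.
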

\begin{proposition}
Let $X$ be a connected, locally path connected and coverable space. Then $\psp$ is trivial if and only if $X$ is semi-locally simply connected.
\end{proposition}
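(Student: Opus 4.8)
The plan is to prove the two implications of the biconditional separately, and I expect the forward direction to be where coverability does the real work via Theorem \ref{T1}, while the reverse direction is a direct computation with the definition of the Spanier group that does not even need coverability.

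First I would dispatch the implication ``$X$ semi-locally simply connected $\Rightarrow \psp$ trivial''. Using local path connectedness, for each point $y\in X$ I would choose a \emph{path-connected} open neighborhood $U_y$ whose loops die in $X$, i.e.\ with $i_*\pi_1(U_y,y)$ trivial in $\pi_1(X,y)$. Let $\U=\{U_y\}_{y\in X}$ be the resulting open cover. I claim $\pi(\U,x)$ is trivial. A typical generator of $\pi(\U,x)$ has the form $\prod_{j=1}^{n}\al_j*\bt_j*\al_j^{-1}$, where each $\bt_j$ is a loop lying inside some member $U_i\in\U$. Since $U_i$ is path connected, I can conjugate $\bt_j$ by a path in $U_i$ to a loop based at the center $y_i$, and semi-local simple connectivity at $y_i$ forces that loop to be null-homotopic in $X$; hence $\bt_j$ itself is null-homotopic in $X$, and so each conjugate $\al_j*\bt_j*\al_j^{-1}$ represents the trivial class. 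Therefore $\pi(\U,x)=1$, and since $\psp\leq\pi(\U,x)$ by definition of the Spanier group as an intersection, $\psp$ is trivial.

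Next I would prove the converse ``$\psp$ trivial $\Rightarrow X$ semi-locally simply connected'', and this is precisely where the coverability hypothesis is needed. By the equivalence $(i)\Leftrightarrow(iv)$ of Theorem \ref{T1}, a connected, locally path connected coverable space is exactly a semi-locally Spanier space; that is, every point $x$ admits an open neighborhood $U$ with $i_*\pi_1(U,x)\leq\psp$. Feeding in the hypothesis $\psp=1$, I obtain $i_*\pi_1(U,x)$ trivial at every point, which is verbatim the definition of semi-local simple connectivity. This completes the equivalence.

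The hard part, such as it is, is confined to the reverse implication's base-point bookkeeping: the generators $\bt_j$ of $\pi(\U,x)$ are loops based at arbitrary points of the $U_i$, not at the distinguished centers $y_i$ at which I assumed semi-local simple connectivity, so I must use local path connectedness to shrink to path-connected neighborhoods and transport base points along interior paths. Once that is handled, both directions are immediate, with the forward implication reducing entirely to invoking Theorem \ref{T1} to convert ``coverable'' into ``semi-locally Spanier''.
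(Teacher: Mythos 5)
Your proof is correct, but it takes a genuinely different route from the paper's, which stays inside covering-space theory for both implications. For ($\psp$ trivial $\Rightarrow$ semi-locally simply connected), the paper uses coverability to produce a covering $\pc$ with $\pst=\psp=1$; then $\wt{X}$ is simply connected, and the existence of a simply connected covering classically implies semi-local simple connectivity. For the converse, the paper invokes the classical existence of a simply connected covering over a connected, locally path connected, semi-locally simply connected space together with Proposition \ref{p2} (namely $\psp\leq\pst$ for every covering), which forces $\psp\leq 1$. You instead prove the first implication by citing the equivalence $(i)\Leftrightarrow(iv)$ of Theorem \ref{T1} and substituting $\psp=1$ into the definition of semi-locally Spanier space, and you prove the converse by a bare-hands open-cover computation: a cover $\U$ by path-connected neighborhoods whose loops die in $X$ satisfies $\pi(\U,x)=1$, hence $\psp=1$. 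Your converse is more elementary and more informative than the paper's --- it needs neither coverability nor any existence theorem for coverings, only local path connectedness, and your base-point transport inside the path-connected members of $\U$ is exactly the care that step requires. Your first implication, by contrast, leans on the heavier Theorem \ref{T1}; since this proposition appears in the paper \emph{before} the proof of Theorem \ref{T1}, that is a forward reference, but it is not circular: the proof of Theorem \ref{T1} rests on Theorem \ref{t5} and Propositions \ref{p3}, \ref{p4}, \ref{p5}, none of which uses this proposition. If you wish to avoid the forward reference, observe that coverability directly yields the covering $\wt{X}_{\psp}$, which is simply connected once $\psp=1$; that is the paper's one-line argument.
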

\begin{proof}
Since $X$ is coverable, there exists a covering $\pc$ such that $\pst=\psp=1$ and hence $\wt{X}$ is simply connected which implies that $X$ is semi-locally simply connected. The converse holds by Proposition \ref{p2}.
\end{proof}
The Hawaiian Earring space, $HE$, is a famous example of a space which is not semi-locally simply connected. Also, the Spanier group of the Hawaiian Earring space is trivial since if $\U_n$'s are open covers of the Hawaiian Earring by open disk with diameter $1/n$, for every $n\in\N$, then $\pi_1^{sp}(HE,0)\leq\bigcap_{n\in\N}\pi(\U_n,0)=1$. Hence we have the following corollary.
\begin{corollary}
The Hawaiian Earring space is not coverable.
\end{corollary}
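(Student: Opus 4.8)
The plan is to argue by contradiction, leaning on the two facts established immediately before the statement: that $HE$ is connected and locally path connected, and that its Spanier group $\psp=\pi_1^{sp}(HE,0)$ is trivial. Suppose, toward a contradiction, that $HE$ is coverable. Then $HE$ meets the hypotheses of the preceding proposition, so the triviality of $\psp$ would force $HE$ to be semi-locally simply connected. Thus the entire argument reduces to checking that $HE$ is \emph{not} semi-locally simply connected at its wild point $0$.

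To establish this failure, I would exhibit, inside every neighborhood of $0$, a loop that is not null-homotopic in $HE$. Given any open neighborhood $U$ of $0$, all but finitely many of the shrinking circles $C_n$ lie entirely inside $U$; fix one such circle $C_n\sub U$ and let $\al_n$ be the loop traversing $C_n$ once. The key technical input is the standard retraction $r_n:HE\lo C_n$ that collapses every circle other than $C_n$ to the base point $0$. Since $r_n\circ\al_n$ is a generator of $\pi_1(C_n,0)\cong\Z$, the loop $\al_n$ cannot be null-homotopic in $HE$, and it lies in $U$. Hence no neighborhood of $0$ satisfies the semi-local simple connectivity condition, so $HE$ is not semi-locally simply connected, contradicting the conclusion of the previous paragraph. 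Therefore $HE$ is not coverable.

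I expect the only nontrivial step to be this retraction argument showing $[\al_n]\neq 1$; everything else is a direct invocation of the preceding proposition. As an alternative self-contained route that sidesteps that proposition, one could argue straight from Theorem \ref{TS}: since $\psp=1$, coverability would in particular require the covering $\wt{X}_{1}$ to exist, and hence (by Theorem \ref{TS}) an open cover $\U$ of $HE$ with $\pi(\U,0)\leq 1$. But for \emph{any} open cover $\U$, the member containing $0$ must contain some whole circle $C_n$, so taking the constant conjugating path together with $\bt=\al_n$ yields $[\al_n]\in\pi(\U,0)$ with $[\al_n]\neq 1$, giving $\pi(\U,0)\neq 1$. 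Either way the contradiction is the same, and $HE$ fails to be coverable.
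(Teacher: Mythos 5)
Your main argument is essentially the paper's own: the paper deduces the corollary from the immediately preceding proposition (coverable $+$ trivial Spanier group $\Rightarrow$ semi-locally simply connected) together with the triviality of $\pi_1^{sp}(HE,0)$ and the well-known failure of semi-local simple connectivity of $HE$, and you follow exactly this route, merely supplying the standard retraction argument for the latter fact, which the paper cites as famous rather than proves. Your alternative route via Theorem \ref{TS} is also correct, but it amounts to unrolling the proof of that same proposition, so it is not a genuinely different approach.
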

\begin{proposition}\label{p3}
A space $X$ is coverable if and only if $\wt{X}_{\psp}$ exists.
\end{proposition}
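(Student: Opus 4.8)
The plan is to reduce both directions to Spanier's classification, Theorem \ref{TS}, which converts the existence of $\wt{X}_H$ into the existence of an open cover $\U$ with $\pi(\U,x)\leq H$. First I would dispatch the forward implication, which is immediate: if $X$ is coverable then $\wt{X}_H$ exists for every $H$ with $\psp\leq H$, and since $\psp\leq\psp$ trivially, taking $H=\psp$ shows that $\wt{X}_{\psp}$ exists.

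For the converse, I would start by assuming $\wt{X}_{\psp}$ exists and applying Theorem \ref{TS} to obtain an open cover $\U$ of $X$ with $\pi(\U,x)\leq\psp$. The key move is then to invoke the definition of the Spanier group as the intersection $\psp=\bigcap_{\text{open covers }\V}\pi(\V,x)$, which always gives the reverse inclusion $\psp\leq\pi(\U,x)$. Combining the two yields $\pi(\U,x)=\psp$; that is, the single cover $\U$ realizes the Spanier group exactly.

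With this in hand, I would let $H$ be an arbitrary subgroup satisfying $\psp\leq H$. Using the same cover $\U$, we have $\pi(\U,x)=\psp\leq H$, so Theorem \ref{TS} again produces a covering $\pc$ with $\pst=H$, i.e.\ $\wt{X}_H$ exists. Since $H$ was arbitrary subject to $\psp\leq H$, this is precisely the definition of coverability, completing the proof.

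The step I expect to carry the weight is the observation that one open cover $\U$ serves all admissible $H$ simultaneously: the equality $\pi(\U,x)=\psp$ shows that the extremal case $H=\psp$ already forces a cover dominating every larger subgroup. Once this uniformity is established, everything else is a direct double application of Theorem \ref{TS}, and I anticipate no further technical obstacle.
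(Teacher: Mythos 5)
Your proposal is correct and follows essentially the same route as the paper: the forward direction is immediate from the definition of coverability, and the converse applies Theorem \ref{TS} to extract a cover $\U$ with $\pi(\U,x)\leq\psp$ and then applies it again to each $H\supseteq\psp$. Your intermediate observation that $\pi(\U,x)=\psp$ is correct but unnecessary, since the chain $\pi(\U,x)\leq\psp\leq H$ already suffices.
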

\begin{proof}
The necessity comes from the definition. For the sufficiency, let $\psp\leq H\leq \pi_1(X,x)$. By Theorem \ref{TS}, since $\wt{X}_{\psp}$ exists, there is an open cover $\U$ of $X$ such that $\pi(\U,x)\leq \psp$. Hence $\wt{X}_H$ exists.
\end{proof}
\begin{lemma}(\cite[\S 2.5 Lemma 11]{Sp}).\label{LS}
If $\pc$ is a covering such that $\pst=H$ and $\U$ is the open cover of $X$ by evenly covered open neighborhoods, then $\pi(\U)\leq H$.
\end{lemma}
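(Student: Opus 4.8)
The plan is to invoke the standard lifting criterion for covering spaces: a loop $\gamma$ based at $x$ satisfies $[\gamma]\in H=\pst$ if and only if the unique lift of $\gamma$ starting at $\ti{x}$ is again a loop (i.e.\ closes up at $\ti{x}$). Since every element of $\pi(\U)$ is by definition a product $\prod_{j=1}^{n}\al_j*\bt_j*\al_j^{-1}$, where each $\al_j$ is a path from $x$ and each $\bt_j$ is a loop lying in some member of $\U$, and since $H$ is a subgroup of $\pi_1(X,x)$, it suffices to prove that each single generator $[\al*\bt*\al^{-1}]$ belongs to $H$; the product then follows automatically.

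To handle one generator, I would lift the loop $\al*\bt*\al^{-1}$ piece by piece starting at $\ti{x}$. First lift $\al$ to a path $\ti{\al}$ beginning at $\ti{x}$ and ending at some point $\ti{y}\in p^{-1}(\al(1))$. The essential step is to show that the lift of $\bt$ starting at $\ti{y}$ is again a loop. Here is where the hypothesis that $\bt$ lies inside an evenly covered neighborhood $U\in\U$ enters: writing $p^{-1}(U)$ as a disjoint union of sheets, each mapped homeomorphically onto $U$ by $p$, let $V$ be the sheet containing $\ti{y}$. Then $(p|_V)^{-1}\circ\bt$ is a path in $V$ lifting $\bt$ and starting at $\ti{y}$, so by uniqueness of path lifting it is \emph{the} lift of $\bt$ beginning at $\ti{y}$; since $\bt$ is a loop at $\al(1)$ and $(p|_V)^{-1}$ is single-valued, this lift ends again at $\ti{y}$. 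Finally, the lift of $\al^{-1}$ beginning at $\ti{y}$ simply retraces $\ti{\al}$ in reverse and returns to $\ti{x}$.

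Concatenating these three lifts shows that the unique lift of $\al*\bt*\al^{-1}$ starting at $\ti{x}$ is a loop at $\ti{x}$, so by the lifting criterion $[\al*\bt*\al^{-1}]\in H$. Because an arbitrary element of $\pi(\U)$ is a finite product of such generators and $H$ is a subgroup, every element of $\pi(\U)$ lies in $H$, giving $\pi(\U)\sub H$ as required.

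I expect the only genuine subtlety to be the claim that a loop contained in an evenly covered neighborhood lifts to a loop; everything else is routine bookkeeping with concatenation and uniqueness of lifts. The proof of that claim rests squarely on the sheet decomposition afforded by even covering together with the uniqueness of path lifting, and it is precisely the place where the choice of $\U$ as the cover by evenly covered neighborhoods is used.
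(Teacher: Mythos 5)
Your proof is correct, and it coincides with the standard argument: the paper itself gives no proof of this lemma, citing it verbatim from Spanier's book (\cite[\S 2.5, Lemma 11]{Sp}), whose proof is the same reduction to generators followed by the path-lifting argument you give. The one step you flag as the genuine content --- that a loop inside an evenly covered neighborhood lifts, via the sheet containing the initial point, to a loop, so that the lift of $\al*\bt*\al^{-1}$ starting at $\ti{x}$ closes up --- is indeed exactly where the choice of $\U$ is used, and your handling of it via uniqueness of path lifting is sound.
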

\begin{proposition}\label{p4}
$(i)$ If $\U$ and $\V$ are two $\pi$-stable open covers of $X$, then $\pi(\U)=\pi(\V)$.\\
$(ii)$ The open cover $\U$ of $X$ is $\pi$-stable if and only if $\psp=\pi(\U,x)$.\\
$(iii)$ the covering space $\wt{X}_{\psp}$ exists if and only if there exists a $\pi$-stable open covering $\U$ of $X$.\\
\end{proposition}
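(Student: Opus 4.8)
The plan is to prove the three parts in the stated order, since (i) is an immediate consequence of the common-refinement construction, (ii) is the heart of the statement and identifies $\pi$-stability with the equality $\pi(\U,x)=\psp$, and (iii) then follows by feeding (ii) into Theorem \ref{TS}. The recurring tool will be the common refinement $\U\cap\V=\{U\cap V\mid U\in\U,\ V\in\V\}$ already used in Proposition \ref{p1}, together with the monotonicity $\pi(\V,x)\sub\pi(\U,x)$ under refinement (Proposition \ref{ps}(i)) and the defining property $\psp=\bigcap_{\mathcal{W}}\pi(\mathcal{W},x)$, which in particular yields $\psp\sub\pi(\mathcal{W},x)$ for \emph{every} open cover $\mathcal{W}$.

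For part (i) I would take two $\pi$-stable covers $\U$ and $\V$ and form $\U\cap\V$. Since $\U\cap\V$ refines $\U$ and $\U$ is $\pi$-stable, $\pi(\U)=\pi(\U\cap\V)$; symmetrically $\pi(\V)=\pi(\U\cap\V)$, and therefore $\pi(\U)=\pi(\V)$.

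For part (ii), the forward implication needs two inclusions. The inclusion $\psp\sub\pi(\U,x)$ is immediate because $\U$ is one of the covers in the intersection defining $\psp$. For the reverse inclusion I would fix an arbitrary open cover $\V$ and compare $\pi(\U)$ with $\pi(\V)$ through $\U\cap\V$: refining $\U$ gives $\pi(\U)=\pi(\U\cap\V)$ by $\pi$-stability, while refining $\V$ gives $\pi(\U\cap\V)\sub\pi(\V)$ by Proposition \ref{ps}(i); hence $\pi(\U)\sub\pi(\V)$ for all $\V$, and so $\pi(\U)\sub\psp$. This is the step I expect to carry the real content: the $\pi$-stability hypothesis is precisely what promotes minimality of $\pi(\U)$ among its own refinements to minimality among all covers, i.e.\ equality with the intersection $\psp$. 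The converse implication is softer; if $\psp=\pi(\U,x)$ and $\V$ refines $\U$, then $\pi(\V)\sub\pi(\U)=\psp$ by Proposition \ref{ps}(i), while $\psp\sub\pi(\V)$ always holds, forcing $\pi(\V)=\pi(\U)$, so $\U$ is $\pi$-stable.

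Finally, for part (iii) I would read off both directions from part (ii) and Theorem \ref{TS}. If $\wt{X}_{\psp}$ exists, then by Theorem \ref{TS} there is an open cover $\U$ with $\pi(\U,x)\sub\psp$; combined with the always-valid $\psp\sub\pi(\U,x)$ this gives $\pi(\U,x)=\psp$, whence $\U$ is $\pi$-stable by (ii). Conversely, a $\pi$-stable cover $\U$ satisfies $\pi(\U,x)=\psp$ by (ii), so in particular $\pi(\U,x)\sub\psp$, and Theorem \ref{TS} with $H=\psp$ produces the covering $\wt{X}_{\psp}$. I anticipate no genuine obstacle beyond the reverse inclusion in (ii); the rest is bookkeeping with refinements and the intersection definition of $\psp$.
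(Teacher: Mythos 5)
Your proposal is correct and takes essentially the same approach as the paper: the common-refinement argument for part (ii) is identical to the paper's, and parts (i) and (iii) are handled by the same bookkeeping with Proposition \ref{ps} and Spanier's classification. The only cosmetic difference is in the forward direction of (iii), where the paper exhibits a concrete cover (the evenly covered neighborhoods, via Lemma \ref{LS}) while you invoke the ``only if'' half of Theorem \ref{TS} directly --- these amount to the same thing, since that half of Theorem \ref{TS} is exactly what Lemma \ref{LS} provides.
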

\begin{proof}
The first step comes from definitions. For (ii), let $\U$ be a $\pi$-stable open cover of $X$. By definition $\psp\leq\pi(\U,x)$. For the reverse containment, let $\V$ be an arbitrary open cover of $X$. Then $\U\cap\V$ is a refinement of $\U$ and hence $\pi(\U)=\pi(\U\cap\V)\leq\pi(\V)$. Therefore $\pi(\U)\leq\psp$, as desired. The converse is trivial by definitions and part (i) of Proposition \ref{ps}. For (iii), assume $p:\wt{X}_{\psp}\lo X$ is a covering and $\U$ is the open cover of $X$ by evenly covered open neighborhoods. Since $p_*\pi_1(\wt{X}_{\psp},\ti{x})=\psp$, Lemma \ref{LS} implies that $\pi(\U,x)\sub\psp$ and hence the result holds by (ii). The converse holds by (ii) and Theorem \ref{TS}.
\end{proof}
The following theorem shows the importance of a universal covering for the existence of other coverings and vice versa.
\begin{theorem}\label{t5}
A space $X$ has a universal covering if and only if $X$ is coverable.
\end{theorem}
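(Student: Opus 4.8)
The plan is to prove both implications by identifying the universal covering with the covering $\wt{X}_{\psp}$ whose image subgroup is the Spanier group, so that the whole statement reduces to Proposition \ref{p3}. For the implication that coverability yields a universal covering, I would start from Proposition \ref{p3}, which guarantees that the covering $p:\wt{X}_{\psp}\lo X$ with $\pst=\psp$ exists, and then verify the categorical universal property for it. Given an arbitrary covering $q:\wt{Y}\lo X$ with $\wt{Y}$ path connected, set $K=q_*\pi_1(\wt{Y},\ti{y})$; Proposition \ref{p2} gives $\psp\leq K$. Since $\wt{X}_{\psp}$ is path connected and locally path connected (being a covering of the locally path connected space $X$) and its image subgroup $\psp$ is contained in $K$, the classical lifting criterion produces a unique basepoint-preserving map $f:\wt{X}_{\psp}\lo\wt{Y}$ with $q\circ f=p$, and a standard argument shows this $f$ is itself a covering. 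Uniqueness up to equivalence follows from connectedness of $\wt{X}_{\psp}$ together with the basepoint normalization, so $\wt{X}_{\psp}$ is a universal covering.

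For the converse, suppose $p:\wt{X}\lo X$ is a universal covering and put $H=\pst$. By Proposition \ref{p2} we already have $\psp\leq H$, so it remains to prove $H\leq\psp$. Here I would invoke the universal property: for every covering $q:\wt{Y}\lo X$ with $\wt{Y}$ path connected there is an $f$ with $q\circ f=p$, and applying $\pi_1$ gives $H=(q\circ f)_*\pi_1(\wt{X})=q_*f_*\pi_1(\wt{X})\leq q_*\pi_1(\wt{Y})$. Thus $H$ lies inside the image subgroup of \emph{every} path-connected covering of $X$. Using the characterization recorded in the Remark (a loop belongs to $\psp$ precisely when it belongs to the image subgroup of every covering of $X$), this forces $H\leq\psp$, hence $H=\psp$. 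Therefore $\wt{X}=\wt{X}_{\psp}$ exists, and Proposition \ref{p3} concludes that $X$ is coverable.

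I expect the delicate point to be the inclusion $H\leq\psp$ in the converse, since it is exactly there that one must know $\psp$ equals the intersection of the image subgroups taken over all path-connected coverings, not merely over a cofinal family. The Remark supplies this identification directly; absent it, one would have to reconstruct the fact from the open-cover description of $\psp$ through Theorem \ref{TS} and Lemma \ref{LS}. The forward direction, by contrast, should be routine covering theory once Proposition \ref{p3} furnishes $\wt{X}_{\psp}$, the only care being to record that coverings of locally path connected spaces are again locally path connected, so that the lifting criterion genuinely applies to $\wt{X}_{\psp}$ and to $\wt{Y}$.
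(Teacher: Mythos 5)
Your proof is correct, and its overall skeleton matches the paper's: both identify the universal covering with $\wt{X}_{\psp}$, prove the forward direction via Proposition \ref{p3}, Proposition \ref{p2} and the lifting criterion (the paper compresses this into one sentence, you spell it out), and prove the converse by using the universal property to force $H=\psp$, then closing with Proposition \ref{p3}. The one genuine divergence is exactly the step you flagged as delicate. The paper never appeals to the Remark; instead it instantiates the universal property at the specific coverings $\wt{X}_{\pi(\U)}$, which exist by Theorem \ref{TS} applied with the subgroup $\pi(\U,x)$ itself. This yields $H\leq\pi(\U,x)$ for \emph{every} open cover $\U$, so $H\leq\psp$ falls out of the definition of $\psp$ as $\bigcap_{\U}\pi(\U,x)$ --- no identification of $\psp$ with the intersection of all image subgroups is required. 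Your route through the Remark is legitimate (the Remark is stated in the paper, and its nontrivial direction is proved by precisely this construction via Theorem \ref{TS}), but it makes your argument rest on a fact the paper leaves as an exercise, whereas the paper's converse is self-contained given Theorem \ref{TS}. In short: where you quantify over all path-connected coverings and then need to know their image subgroups intersect exactly in $\psp$, the paper quantifies only over the cofinal family $\{\wt{X}_{\pi(\U)}\}$, for which the needed containment is definitional; the two arguments have the same mathematical content, but the paper's choice of test objects eliminates the extra lemma.
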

\begin{proof}
If $X$ is coverable, then by the definition, $\wt{X}_{\psp}$ exists. By Proposition \ref{p2}, $\wt{X}_{\psp}$ is a universal covering space and hence the result holds. Conversely, assume that $\pc$ is a universal covering of $X$ and $\pst=H$. We claim that for every open cover $\U$ of $X$, $H\leq\pi(\U,x)$. For, if $q:\wt{X}_{\pi(\U})\lo X$ is the covering such that $q_*\pi_1(\wt{X}_{\pi(\U}))=\pi(\U)$, then by the universal property of $\pc$
$$H=\pst\leq q_*\pi_1(\wt{X}_{\U})=\pi(\U).$$
 Hence $H\leq\psp$ which implies that $H=\psp$. Thus the covering space $\wt{X}_{\psp}$ exists and therefore by Proposition \ref{p3}, $X$ is coverable.
\end{proof}
\begin{proposition}\label{p5}
A space $X$ is semi-locally Spanier space if and only if there exists an open cover $\U$ of $X$ such that $\pi(\U,x)=\psp$, for every $x\in X$.
\end{proposition}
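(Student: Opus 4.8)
The plan is to prove the two implications separately. In both directions local path connectedness (a standing hypothesis) lets me replace any open neighbourhood by the open, path-connected path-component of the relevant point without enlarging its image in the fundamental group, and the essential algebraic input is the change-of-basepoint formula of Proposition \ref{ps}(iii).

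I would begin with sufficiency, the shorter direction. Suppose $\U$ is an open cover with $\pi(\U,x)=\psp$ for every $x$. Fixing $x\in X$, I would choose a member $U_i\in\U$ containing $x$ and a path-connected open neighbourhood $U\sub U_i$ of $x$. Any loop $\bt$ in $U$ based at $x$ is then a loop lying inside the single member $U_i$, so taking $n=1$ with $\al_1$ the constant path at $x$ exhibits $[\bt]$ as an element of $\pi(\U,x)$. Hence $i_*\pi_1(U,x)\leq\pi(\U,x)=\psp$, which is exactly the semi-local Spanier condition at $x$.

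For necessity, assume $X$ is semi-locally Spanier. For each $y\in X$ I would pick, after shrinking to its path-component, a path-connected open neighbourhood $U_y$ with $i_*\pi_1(U_y,y)\leq\pi_1^{sp}(X,y)$, and set $\U=\{U_y\mid y\in X\}$. This is an open cover, so $\psp\leq\pi(\U,x)$ is immediate from the description of $\psp$ as the intersection over all covers; the content lies in the reverse inclusion $\pi(\U,x)\leq\psp$. A generator of $\pi(\U,x)$ is a product $\prod_j\al_j*\bt_j*\al_j^{-1}$ in which each $\bt_j$ is a loop inside some $U_{y_j}$, and since $\psp$ is a subgroup it suffices to place each factor $[\al_j*\bt_j*\al_j^{-1}]$ in $\psp$. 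For a fixed $j$ I would join the basepoint of $\bt_j$ to $y_j$ by a path $\gamma_j$ inside $U_{y_j}$ (possible by path-connectedness), so that $\gamma_j^{-1}*\bt_j*\gamma_j$ is a loop at $y_j$ inside $U_{y_j}$ whose class lies in $i_*\pi_1(U_{y_j},y_j)\leq\pi_1^{sp}(X,y_j)$.

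The hard part, and the only step that is more than bookkeeping, is transporting this membership from $y_j$ back to the base point $x$. Writing $\delta_j=\al_j*\gamma_j$, a path from $x$ to $y_j$, a short homotopy gives $[\al_j*\bt_j*\al_j^{-1}]=\vf_{[\delta_j^{-1}]}\big([\gamma_j^{-1}*\bt_j*\gamma_j]\big)$. I would first upgrade Proposition \ref{ps}(iii) to the Spanier group itself: because $\vf_{[\delta_j^{-1}]}$ is an isomorphism it commutes with the intersection over all covers, so $\vf_{[\delta_j^{-1}]}\big(\pi_1^{sp}(X,y_j)\big)=\pi_1^{sp}(X,x)$. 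Applying this to the class just found places $[\al_j*\bt_j*\al_j^{-1}]$ in $\psp$; multiplying over $j$ yields $\pi(\U,x)\leq\psp$ and hence equality. Since the cover $\U$ was built independently of any base point and the same reasoning applies verbatim at any point, I obtain $\pi(\U,x)=\psp$ for every $x$, as required.
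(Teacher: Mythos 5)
Your proof is correct and takes essentially the same approach as the paper: the paper's entire proof is the hint ``Use Proposition \ref{ps} (iii) and the definition of $\pi(\U,x)$,'' and your argument is exactly that hint carried out in detail --- building the cover from the semi-locally Spanier neighborhoods, checking generators of $\pi(\U,x)$ against the definition, and using the change-of-basepoint isomorphism of Proposition \ref{ps}(iii) (upgraded to $\psp$ by intersecting over all covers) to transport membership back to the base point. The steps you spell out (shrinking to path components via local path connectedness, conjugating $\bt_j$ to a loop at $y_j$) are precisely the bookkeeping the paper leaves implicit.
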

\begin{proof}
Use Proposition \ref{ps} (iii) and the definition of $\pi(\U,x)$.
\end{proof}
\ \ \\
\textbf{Proof of Theorem \ref{T1}.}\\
$(i)\Leftrightarrow (ii)$: Theorem \ref{t5}.\\
$(i)\Leftrightarrow (iii)$: Use Proposition \ref{p3} and Proposition \ref{p4}, (iii).\\
$(ii)\Leftrightarrow(iv)$: Use Proposition \ref{p5}, Proposition \ref{p4}, (iii) and Proposition \ref{p3}.\\
$(iv)\Leftrightarrow(v)$: By definition of wild point, the existence of wild point $x$ causes $X$ not to be semi-locally Spanier space at $x$ and vice versa.\\
$(i)\Leftrightarrow(vi)$: Use \cite[Theorem 2.1, Corollary 3.9]{T3}.$\hfill\Box$

By \cite{P5}, if $p:\wt{X}\lo X$ is a covering such that $\pst=\psp$, then $\wt{X}$ is a Spanier space and hence $p$ is a Spanier covering. Therefore we have
\begin{corollary}\label{C1}
All the universal covering spaces of connected and locally path connected spaces are Spanier space.
\end{corollary}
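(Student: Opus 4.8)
The plan is to combine two ingredients already assembled in the excerpt: the identification of the image subgroup of a universal covering carried out inside the proof of Theorem \ref{t5}, and the criterion from \cite{P5} recalled in the paragraph immediately preceding this corollary. Accordingly, I would begin by fixing a universal covering $\pc$ of a connected, locally path connected space $X$, with base point $\ti{x}\in p^{-1}(x)$, and setting $H=\pst$.

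The decisive step is to show that $\pst=\psp$. This is precisely the content of the converse direction of the proof of Theorem \ref{t5}: for each open cover $\U$ of $X$, the covering $\wt{X}_{\pi(\U)}$ exists by Theorem \ref{TS}, and the universal property of $\pc$ produces a factorization through it, which forces $\pst\leq\pi(\U,x)$. Intersecting over all open covers $\U$ then gives $\pst\leq\psp$, while the reverse inclusion $\psp\leq\pst$ is exactly Proposition \ref{p2}. Hence $\pst=\psp$.

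With this equality in hand, I would simply invoke the result of \cite{P5} quoted just above the corollary: a covering whose image subgroup coincides with the Spanier group has Spanier total space, that is, $\pi_1(\wt{X},\ti{x})=\pi_1^{sp}(\wt{X},\ti{x})$. Therefore $\wt{X}$ is a Spanier space, which is the assertion of the corollary.

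I do not anticipate a genuine obstacle here, since the substantive work is already absorbed into Theorem \ref{t5} and into the cited property of \cite{P5}; the corollary is essentially a repackaging of these two facts. The only minor point to keep in mind is that $\wt{X}$, being a connected covering of the locally path connected space $X$, is itself connected and locally path connected, so that the terminology of Spanier groups and of Spanier spaces applies to it verbatim and the invocation of \cite{P5} is legitimate.
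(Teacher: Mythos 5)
Your proposal is correct and takes essentially the same route as the paper: the paper also obtains the equality $\pst=\psp$ for a universal covering from the converse direction of the proof of Theorem \ref{t5} (via Proposition \ref{p2} and the universal property applied to the coverings $\wt{X}_{\pi(\U)}$), and then invokes the result of \cite{P5} that a covering with $\pst=\psp$ is a Spanier covering.
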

Recall that for a covering $p:\wt{X}\lo X$, we have $|p^{-1}(\{x\})|=[\pi_(X,x):\pst]$.\\
\ \ \\
\textbf{Proof of Theorem \ref{T2}.}\\
By Theorem \ref{T1}, it suffices to show that $X$ is semi-locally Spanier space. Let $y$ be fixed but arbitrary and $B_1\supseteq B_2\supseteq ...$ be a countable local basis at $y$. We will denote by $G_n$ the image of the natural map $\pi_1(B_n,y)\lo\pi_1(X,y)$. By \cite[Theorem 4.4]{CC}, the sequence $G_1\pi_1^{sp}(X,y)\supseteq G_2\pi_1^{sp}(X,y)\supseteq \cdots$ is eventually constant. We can choose $k\in\N$ large enough so that $G_k\pi_1^{sp}(X,y)=G_{k+1}\pi_1^{sp}(X,y)=\cdots$ and claim that $G_k\leq\pi_1^{sp}(X,y)$ which implies that $X$ is a semi-locally Spanier space. Let $[\al]\in G_k$ and $\U$ be an arbitrary open cover of $X$. If $U_y\in\U$ contains $y$, there exists  $m>k$ such that $B_m\sub U_y$ and then $\U_m:=\U\cup\{B_m\}$ is a refinement of $\U$. Obviously, $[\al]\in G_k\pi_1^{sp}(X,y)$ which implies $[\al]\in G_m\pi_1^{sp}(X,y)$ since $m>k$. Hence $[\al]=[\al_m][\gamma]$, where $[\al_m]\in G_m$ and $\gamma\in\pi_1^{sp}(X,y)$. Therefore $[\al]=[\al_m*\gamma]\in\pi(\U_m,y)\leq\pi(\U,y)$.

For the converse, let $p:\wt{X}\lo X$ be a universal covering of $X$ and assume by contradiction that $\frac{\pi_1(X,x)}{\psp}$ is uncountable. Then $p^{-1}(\{x\})$ is an uncountable subset of $\wt{X}$, which is a separable metric space by \cite[Theorem 4.1]{La}. Hence $p^{-1}(\{x\})$ contains a limit point of itself, contradicting the local homeomorphism property of covering maps.$\hfill\Box$

A restatement of the Mycielski's conjecture that is proved by Shelah \cite{Sh} and by Pawlikowski \cite{Paw} is that a connected, locally path connected, compact metric space with countable fundamental group has simply connected covering (which is universal covering). As a generalization, we can replace compact metric hypothesis by a weaker one first countablility and by passing from simply connected covering to universal covering.
\begin{theorem}(Generalized Shelah Theorem)
A connected, locally path connected and first countable space $X$ has a universal covering or $\pi_1(X,x)$ is uncountable.
\end{theorem}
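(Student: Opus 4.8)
The plan is to obtain this statement as an immediate dichotomy built on Theorem \ref{T2}. The two alternatives in the conclusion --- existence of a universal covering versus uncountability of $\pi_1(X,x)$ --- become mutually exhaustive once we split on the cardinality of the fundamental group, so I would argue by cases. If $\pi_1(X,x)$ is uncountable, the second alternative already holds and there is nothing to prove. Thus I would assume for the remainder that $\pi_1(X,x)$ is countable and aim to produce a universal covering.

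In the countable case, the key observation is that passing to the quotient by the Spanier group cannot increase cardinality. Since $\psp$ is the intersection of the subgroups $\pi(\U,x)$, each normal by Proposition \ref{ps}(ii), it is itself a normal subgroup of $\pi_1(X,x)$, and the set of cosets $\frac{\pi_1(X,x)}{\psp}$ is the image of the countable set $\pi_1(X,x)$ under the quotient map. Hence $\frac{\pi_1(X,x)}{\psp}$ is countable.

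Finally I would invoke Theorem \ref{T2}. The space $X$ is connected, locally path connected and first countable by hypothesis, and the quotient $\frac{\pi_1(X,x)}{\psp}$ has just been shown to be countable; these are exactly the conditions under which Theorem \ref{T2} asserts that $X$ admits a universal covering. This settles the remaining case and yields the stated alternative.

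There is no genuine obstacle here: all the covering-theoretic content is already packaged inside Theorem \ref{T2} (whose proof rests on the Cannon and Conner stabilization result together with the semi-locally Spanier characterization of Theorem \ref{T1}). The only point requiring care is the elementary set-theoretic step that a quotient of a countable group is countable, which is precisely what lets the coarse hypothesis ``$\pi_1(X,x)$ countable'' feed into the finer hypothesis ``$\frac{\pi_1(X,x)}{\psp}$ countable'' demanded by Theorem \ref{T2}. Accordingly I expect the entire argument to occupy only a few lines once the case split is made explicit.
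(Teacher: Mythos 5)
Your proposal is correct and is exactly the argument the paper intends: the theorem is stated as an immediate consequence of Theorem \ref{T2}, obtained by noting that if $\pi_1(X,x)$ is countable then so is the quotient $\frac{\pi_1(X,x)}{\pi_1^{sp}(X,x)}$, whence Theorem \ref{T2} yields the universal covering. Nothing further is needed.
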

By \cite[Theorem 5.1]{CC}, any free factor group of the fundamental group of a separable, locally path connected metric space has countable rank and hence is countable. Thus we have
\begin{corollary}
A connected, locally path connected separable metric space $X$ has a universal covering if $\frac{\pi_1(X,x)}{\psp}$ is free.
\end{corollary}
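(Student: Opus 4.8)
The plan is to reduce the statement to Theorem~\ref{T2} by verifying that the hypothesis that $\frac{\pi_1(X,x)}{\psp}$ is free already forces this quotient to be countable; once countability is in hand, the first countability inherited from the metric structure supplies the universal covering directly.

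First I would record that $\psp$ is a normal subgroup of $\pi_1(X,x)$. Indeed, $\psp=\bigcap_{\U}\pi(\U,x)$ is the intersection of the Spanier groups with respect to all open covers, each of which is normal by Proposition~\ref{ps}(ii), and an intersection of normal subgroups is normal. Hence $\frac{\pi_1(X,x)}{\psp}$ is a genuine factor group of $\pi_1(X,x)$, and by hypothesis it is free. Next, since $X$ is separable, locally path connected and metric, I would invoke \cite[Theorem 5.1]{CC}, which asserts that any free factor group of the fundamental group of such a space has countable rank and is therefore countable. Applying this to the free quotient $\frac{\pi_1(X,x)}{\psp}$ shows that $\frac{\pi_1(X,x)}{\psp}$ is countable.

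Finally, a metric space is first countable, since the balls of radius $1/n$ about a point form a countable local basis. Thus $X$ is a connected, locally path connected, first countable space for which $\frac{\pi_1(X,x)}{\psp}$ is countable, and Theorem~\ref{T2} then produces a universal covering of $X$, as desired.

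The argument is essentially a concatenation of results already established or cited, so there is no deep obstacle. The only points requiring care are confirming that ``free'' in the statement is precisely the ``free factor group'' hypothesis constrained by \cite[Theorem 5.1]{CC}, namely freeness of the abstract quotient group $\frac{\pi_1(X,x)}{\psp}$, and making explicit the passage from separable metric to first countable so that Theorem~\ref{T2} is applicable.
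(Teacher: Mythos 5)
Your proposal is correct and follows essentially the same route as the paper: the paper likewise deduces countability of the free quotient $\frac{\pi_1(X,x)}{\psp}$ from \cite[Theorem 5.1]{CC} and then applies Theorem \ref{T2}, using that a separable metric space is first countable. Your explicit verification that $\psp$ is normal (via Proposition \ref{ps}(ii)) is a detail the paper leaves implicit, but it changes nothing in the argument.
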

By the following corollary which is an explicit consequence of Theorems \ref{T1} and \ref{T2}, we are able to say easily that the fundamental group of a space with at least one wild point, like Hawaiian Earring, is uncountable.
\begin{corollary}

Let $X$ be connected, locally path connected and first countable. If $X$ has a wild point, then $\pi_1(X,x)$ is uncountable.
\end{corollary}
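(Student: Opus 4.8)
The plan is to deduce this corollary purely formally from the two main theorems that precede it, using the contrapositive of Theorem \ref{T2}. First I would observe that the hypothesis ``$X$ has a wild point'' feeds directly into Theorem \ref{T1}: by the equivalence $(ii)\Leftrightarrow(v)$ there, the presence of a wild point is exactly the obstruction to $X$ having a universal covering space. Hence, under our standing assumptions that $X$ is connected and locally path connected, the wild point guarantees that $X$ has \emph{no} universal covering.

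Next I would invoke Theorem \ref{T2} in its contrapositive form. Theorem \ref{T2} asserts that a connected, locally path connected, first countable space $X$ has a universal covering whenever $\frac{\pi_1(X,x)}{\psp}$ is countable. Since $X$ here satisfies all of those hypotheses (connectedness, local path connectedness, and first countability) yet, by the previous step, fails to have a universal covering, the quotient $\frac{\pi_1(X,x)}{\psp}$ cannot be countable; that is, it must be uncountable.

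Finally, I would pass from the quotient back to the whole group. The quotient map $\pi_1(X,x)\lo \frac{\pi_1(X,x)}{\psp}$ is a surjection, so an uncountable target forces an uncountable source; a countable group cannot surject onto an uncountable one. Therefore $\pi_1(X,x)$ is uncountable, which is the desired conclusion.

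I do not anticipate a genuine obstacle here, since the argument is a short chaining of the established theorems rather than a new construction. The only point that requires care is the direction of Theorem \ref{T2}: that statement is a one-way implication (countable quotient $\Rightarrow$ existence of a universal covering), so I must be careful to use precisely its contrapositive and not the converse, which holds only under the additional separable-metric hypothesis. Keeping that logical direction straight is the single place where the proof could be mis-stated.
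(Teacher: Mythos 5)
Your proposal is correct and is exactly the argument the paper intends: the corollary is stated there as an ``explicit consequence of Theorems \ref{T1} and \ref{T2},'' namely the equivalence $(ii)\Leftrightarrow(v)$ of Theorem \ref{T1} to rule out a universal covering, followed by the contrapositive of Theorem \ref{T2} and the observation that a countable group cannot surject onto an uncountable quotient. Your care about using only the stated direction of Theorem \ref{T2} (and not its separable-metric converse) is precisely the right point to flag.
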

In the sequel, we concentrate on the fundamental group and the universal covering space of one point unions. At first, we introduce the following Seifert-van Kampen type formulation for the fundamental group of one point unions. It should be mentioned that according to Seifert-van Kampen theorem, the fundamental
group of the one point union of two spaces is naturally isomorphic to the free
product of their fundamental groups provided that each of them is first countable and locally simply
connected. But for the general spaces, this fails.
\begin{proposition}\label{p6}
Let $X$ be the one point union $X_1\vee X_2=\frac{{X_1}\cup {X_2}}{{x_1}\sim {x_2}}$ where $\{x_1\}$ and $\{x_2\}$ are closed in $X_1$ and $X_2$, respectively. If $U_i$ is a neighborhood of $x_i$ in $X_i$ for $i=1,2$, then
 $$\pi_1(X,*) =  < i_*\pi_1({X_1},{x_1}),j_*\pi_1({X_2},{x_2}),k_*\pi_1({U_1\vee U_2},*)>,$$
where $i: X_1\hookrightarrow X$, $j: X_2\hookrightarrow X$ and $k: U_1\vee U_2 \hookrightarrow X$ are inclusions and $*$ is the common point.
\end{proposition}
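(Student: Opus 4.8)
The plan is to establish the nontrivial inclusion, namely that every class in $\pi_1(X,*)$ lies in the subgroup $G:=\langle i_*\pi_1(X_1,x_1),\,j_*\pi_1(X_2,x_2),\,k_*\pi_1(U_1\vee U_2,*)\rangle$; the reverse inclusion $G\sub\pi_1(X,*)$ is immediate, since each generator subgroup is the image of an induced homomorphism. Two structural observations set the stage. First, because $\{x_1\}$ and $\{x_2\}$ are closed in $X_1$ and $X_2$, the singleton $\{*\}$ is closed in $X$, and $X\setminus\{*\}$ is the disjoint union of the two \emph{open} sets $A_1=i(X_1)\setminus\{*\}$ and $A_2=j(X_2)\setminus\{*\}$. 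Second, since replacing $U_i$ by its interior only shrinks $k_*\pi_1(U_1\vee U_2,*)$, I may assume each $U_i$ is open, so that $U:=U_1\vee U_2$ is an open neighborhood of $*$ and $X\setminus U$ is closed.

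Given a loop $\al$ at $*$, I would analyze it through its excursions. The set $C:=\al^{-1}(\{*\})$ is closed, so its complement in $[0,1]$ is a countable disjoint union of open intervals; on each such interval $\al$ avoids $*$ and, being connected, maps entirely into $A_1$ or entirely into $A_2$. I call these the excursions, and I call an excursion \emph{deep} if $\al$ sends some point of it outside $U$.

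The crux of the argument, and the step I expect to be the main obstacle, is to show that only finitely many deep excursions occur; this is exactly where closedness of $\{*\}$ is indispensable. Suppose for contradiction there were infinitely many, on pairwise disjoint intervals $(c_n,d_n)$ with chosen points $t_n$ satisfying $\al(t_n)\notin U$. Being disjoint subintervals of $[0,1]$, their lengths tend to $0$, so along a subsequence with $t_n\to t^*$ we also get $c_n,d_n\to t^*$. The endpoints $c_n,d_n$ lie in the closed set $C$, so continuity forces $\al(t^*)=*$; but $X\setminus U$ is closed and each $\al(t_n)\in X\setminus U$, so $\al(t^*)\notin U$, contradicting $*\in U$. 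Hence there are finitely many deep excursions, occurring on intervals ordered as $(c_1,d_1),\ldots,(c_m,d_m)$.

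Finally I would assemble the decomposition. The points $0,c_1,d_1,\ldots,c_m,d_m,1$ all lie in $C$, so $\al$ is the concatenation of its restrictions to the consecutive subintervals, each of which is a loop at $*$, and $[\al]$ is the corresponding product of their classes. Each restriction to a deep block $[c_l,d_l]$ is a single excursion into some $A_i$ with endpoints at $*$, hence a loop lying in $i(X_1)$ or $j(X_2)$; since $i$ and $j$ are embeddings, this represents a class in $i_*\pi_1(X_1,x_1)$ or $j_*\pi_1(X_2,x_2)$. Each restriction to a gap block $[d_l,c_{l+1}]$ (and to $[0,c_1]$ and $[d_m,1]$) contains no deep excursion and meets $C$ only where $\al=*$, so it maps into $U=U_1\vee U_2$ and represents a class in $k_*\pi_1(U_1\vee U_2,*)$. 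Thus $[\al]$ is a finite product of elements of the three subgroups, giving $[\al]\in G$ and completing the proof.
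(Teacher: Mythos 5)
Your proof is correct, and while it shares the paper's overall skeleton --- prove only the nontrivial inclusion by cutting the loop $\alpha$ at points of $\alpha^{-1}(*)$ into finitely many subloops, each lying in $X_1$, $X_2$, or $U_1\vee U_2$ --- the mechanism producing that finite decomposition is genuinely different. The paper pulls the cover $\{X_1\setminus\{x_1\},\, X_2\setminus\{x_2\},\, U_1\vee U_2\}$ back along $\alpha$, fixes a Lebesgue number $1/N$, and runs an explicit induction choosing cut points $a_n\in\alpha^{-1}(*)$: either the last point of $\alpha^{-1}(*)$ in a window of width $1/N$, or the first return to $*$ beyond that window; termination is forced by the counting estimate $a_n-a_{n-2}\geq 1/N$, so the loop splits into roughly at most $2N$ factors. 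You instead use the canonical excursion structure of the open set $[0,1]\setminus\alpha^{-1}(*)$, and your key lemma --- only finitely many excursions leave $U$ --- is proved by a sequential compactness contradiction: pairwise disjoint subintervals of $[0,1]$ have lengths tending to zero, so the witnesses $t_n$ accumulate at a point of the closed set $\alpha^{-1}(*)$ whose image nevertheless lies in the closed set $X\setminus U$. Your version is more intrinsic (the deep excursions are canonically attached to $\alpha$, and all shallow excursions are absorbed into the $U$-blocks) and it displays exactly where each hypothesis enters: closedness of $\{x_i\}$ makes $\alpha^{-1}(*)$ closed and splits $X\setminus\{*\}$ into two open pieces, while openness of $U$ --- secured by your passage to interiors --- makes $X\setminus U$ closed. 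The paper's version is constructive, avoids any limit argument, and yields an explicit bound on the number of factors in terms of the Lebesgue number. One point in your favor on rigor: the paper calls its pulled-back family an open cover of $I$, which tacitly requires $U_1\vee U_2$ to be open in $X$ even though the $U_i$ are only assumed to be neighborhoods; your explicit reduction to interiors, justified by noting that it only shrinks $k_*\pi_1(U_1\vee U_2,*)$, fills that small gap.
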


\begin{proof}
Let $\alpha : [0,1] \longrightarrow X$ be a loop at $* \in X$. First we define inductively $a_n \in \alpha ^{-1}(*)$ such that $0=a_0 \leq a_1 \leq a_2...\leq a_n \leq 1$ and $\alpha([a_{n-1},a_n])$ is a subset of $X_1$ or $X_2$ or $U_1\vee U_2$. Since $\{x_1\}$ and $\{x_2\}$ are closed in $X_1$ and $X_2$ respectively, $\{\alpha ^{-1}(X_1 \diagdown \{x_1\}),\alpha ^{-1}(X_2 \diagdown \{x_2\}),\alpha ^{-1}(U_1\vee U_2) \}$ is an open cover for the compact set $I$. Let $\lambda >0$ be the Lebesgue number for this cover. Choose $N \in \mathbb{N}$ such that $1/N \leq \lambda$. Put $a_0= 0$. Suppose $a_{n-1}$ has been chosen suitably. Now we obtain $a_n$ properly as follows:

If $a_{n-1}=1$, then put $a_n = 1$.
If $a_{n-1} \neq 1$ and $(\alpha^{-1}(*)) \cap (a_{n-1},min\{a_{n-1}+(1/N),1\}] \neq \varnothing$, then consider $a_n$ to be the maximum of the compact set $\{(\alpha^{-1}(*))\cap [a_{n-1},min\{a_{n-1}+(1/N),1\}] \}$. In this case since $a_n - a_{n-1} \leqslant (1/N)$, we have $\alpha([a_{n-1},a_n])$ is a subset of $X_1$ or $X_2$ or $U_1\vee U_2$.
If $a_{n-1} \neq 1$ and $(\alpha^{-1}(*))\cap (a_{n-1},min\{a_{n-1}+(1/N),1\}] = \varnothing$, then $a_{n-1}+(1/N) <1$ and put $a_n = min \{ (\alpha^{-1}(*))\cap [a_{n-1}+(1/N),1]\}$. In this case $\alpha((a_{n-1},a_n)) \subseteq X \diagdown \{*\} = (X_1 \diagdown \{x_1\}) \cup (X_2 \diagdown \{x_2\})$. Hence $(a_{n-1},a_n) \subseteq \alpha^{-1}(X_1 \diagdown \{x_1\}) \cup \alpha^{-1}(X_1 \diagdown \{x_1\})$. Since $(a_{n-1},a_n)$ is connected and $\alpha^{-1}(X_1 \diagdown \{x_1\}) \cap \alpha^{-1}(X_2 \diagdown \{x_2\}) = \varnothing$, $\alpha([a_{n-1},a_n])$ is a subset of $X_1$ or $X_2$.

Now we show that if $n\geq 2$ and $a_n\neq1$, then $a_n - a_{n-2} \geq 1/N$. If $(\alpha^{-1}(*))\cap (a_{n-2},min\{a_{n-2}+(1/N),1\}] = \varnothing$, then $a_{n-1} - a_{n-2} \geq 1/N$. If $(\alpha^{-1}(*))\cap (a_{n-2},min\{a_{n-2}+(1/N),1\}] \neq \varnothing$, then $(a_{n-1},a_{n-2}+(1/N)] \cap \alpha ^{-1}(*) = \varnothing$. Therefore $a_n \geq a_{n-2}+1/N$ and so there exists $k\in \mathbb{N}$ such that $a_k=1$. Hence $[\alpha]=[\alpha\circ\beta_1][\alpha\circ\beta_2]...[\alpha\circ\beta_k]$, where $\beta_i : I \rightarrow [a_{i-1},a_i]$ is an increasing linear homeomorphism for $i=1,2,\cdots ,k$. Note that for every $1\leq i\leq k$, $Im(\alpha\circ\beta_i)$ is a subset of $X_1$ or $X_2$ or $U_1\vee U_2$.
\end{proof}
\begin{lemma}\label{l2}
Let $X$ be a topological space with a base point $x$.\\
$(i)$ If $U$ is an element of an open cover $\U$ of $X$ which contains $x$, then $[\alpha] \in \pi(\U,x)$, for every loop $\alpha:I\lo U$ at $x$.\\
$(ii)$ If $Y$ is a subspace of $X$, then $[\alpha] \in \pi_1^{sp}(X,x)$, for every loop $\alpha$ such that $[\alpha] \in \pi_1^{sp}(Y,x)$.
\end{lemma}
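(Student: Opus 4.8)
The plan is to treat the two parts separately, with part (i) being essentially a direct unwinding of the definition of $\pi(\U,x)$ and part (ii) requiring a restriction-of-covers argument that exploits the naturality of the Spanier construction under the inclusion $Y\hookrightarrow X$.

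For part (i), I would simply exhibit $\al$ itself in the defining product form for $\pi(\U,x)$. Since $\al$ is a loop at $x$ lying inside $U\in\U$, I take $n=1$, the constant path $c_x$ at $x$ as the conjugating path, and $\bt_1=\al$; then $c_x*\al*c_x^{-1}\simeq\al$, which is precisely a product of the type appearing in the definition of $\pi(\U,x)$. Hence $[\al]\in\pi(\U,x)$ with no further work.

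For part (ii), the plan is to show that $[\al]\in\pi(\U,x)$ for every open cover $\U$ of $X$, which by definition yields $[\al]\in\psp$. Fix such a $\U$. First I would restrict it to $Y$, setting $\V=\{U\cap Y\mid U\in\U\}$; since the $U$'s cover $X\supseteq Y$ and each $U\cap Y$ is open in the subspace topology, $\V$ is an open cover of $Y$. Because $[\al]\in\pi_1^{sp}(Y,x)\sub\pi(\V,x)$, I can write $[\al]=\prod_{j=1}^{n}[\gamma_j*\delta_j*\gamma_j^{-1}]$ as an identity in $\pi_1(Y,x)$, where each $\gamma_j$ is a path in $Y$ starting at $x$ and each $\delta_j$ is a loop inside some $U_{i_j}\cap Y\in\V$.

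The final step is to reinterpret this identity in $X$. Viewing each $\gamma_j$ as a path in $X$ from $x$ and each $\delta_j$ as a loop inside $U_{i_j}\cap Y\sub U_{i_j}\in\U$, the product $\prod_{j=1}^{n}\gamma_j*\delta_j*\gamma_j^{-1}$ is exactly of the form defining $\pi(\U,x)$ for the space $X$. The key observation, and the only point needing care, is that the homotopy realizing the identity above takes place in $Y$ and is therefore equally a homotopy in $X$; consequently the same identity holds in $\pi_1(X,x)$, giving $[\al]\in\pi(\U,x)$. As $\U$ was arbitrary, $[\al]\in\psp$. I do not anticipate a genuine obstacle here: the argument is purely that restricting a cover of $X$ to $Y$ carries conjugated small loops to conjugated small loops without enlarging the relevant neighborhoods, so the only thing to verify carefully is this compatibility of the two ambient spaces.
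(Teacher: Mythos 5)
Your proposal is correct and follows essentially the same route as the paper: part (i) is the direct unwinding of the definition of $\pi(\U,x)$, and part (ii) restricts the cover $\U$ of $X$ to the subspace cover $\{U\cap Y\mid U\in\U\}$ of $Y$, writes $\al$ (up to homotopy in $Y$, hence in $X$) as a product of conjugated loops lying in sets $U_{i_j}\cap Y\sub U_{i_j}$, and concludes $[\al]\in\pi(\U,x)$ for arbitrary $\U$. The only difference is cosmetic: you spell out the constant conjugating path in (i) and the compatibility of the homotopy with the ambient space in (ii), which the paper leaves implicit.
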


\begin{proof}
$(i)$ Let $\alpha$ be a loop at $x$ in $U$. By the definition of $\pi(\U,x)$, $[\alpha] \in \pi(\U,x)$ since $U \in \U$.\\
$(ii)$ Let $[\alpha] \in \pi_1^{sp}(Y,x)$ and $\U = \{U_i | i\in I\}$ be an open cover for $X$. Then $\{U_i \cap Y | i\in I\}$ is an open cover for $Y$. By the definition there are paths $\alpha_j$ and loops $\beta_j$ at $\alpha_j(1)$ in $U_{i_j} \cap Y$ such that $\alpha$ is homotopic to $\alpha_1*\beta_1*\alpha_1^{-1}*\alpha_2*\beta_2*\alpha_2^{-1}...\alpha_n*\beta_n*\alpha_n^{-1}$ in $Y$ relative to $\{0,1\}$. Therefore $[\alpha] \in \pi(\U,x)$ which implies that $[\alpha] \in \pi_1^{sp}(X,x)$.
\end{proof}
\ \ \\
\textbf{Proof of Theorem \ref{T3}.}\\
Assume that $X_1$ and $X_2$ have universal coverings. Then they are semi-locally Spanier and there is a neighborhood $V_j$ of $x_j$ in $X_j$ such that $(i_j)_*\pi_1(V_j,x_j)\leq \pi_1^{sp}(X_j,x_j)$,
for $j=1,2$ and the inclusions $i_j:V_j\hookrightarrow X_j$. We show that
 $k_*\pi_1(V_1\vee V_2,\bar{x})\leq \pi_1^{sp}(X,\bar{x})$, where $k: V_1\vee V_2 \hookrightarrow X$ is
 the inclusion and $\bar{x}$ is the equivalence class of $x_1$ and $x_2$ in $X$. Let $\U$ be an open cover of $X$.
  There exists open set $U_1\vee U_2 \subseteq W \in \U$ such that $U_j$ is a neighborhood
   of $x_j$ in $X_j$, for $j=1,2$. By Proposition \ref{p6} $$\pi_1(V_1\vee V_2,*) =  < (l_1)_*\pi_1({V_1},{x_1}),(l_2)_*\pi_1({V_2},{x_2}),(l_3)_*\pi_1({(U_1\cap V_1)\vee (U_2\cap V_2)},\bar{x})>,$$
 where $l_j:V_j\lo V_1\vee V_2$, for $j=1,2$ and $l_3:(U_1\cap V_1)\vee (U_2\cap V_2)\lo V_1\vee V_2$ are inclusions. Since $(i_j)_*\pi_1(V_j,x_j)\leq \pi_1^{sp}(X_j,x_j)$,
 part (ii) of Lemma \ref{l2} implies that $(k_j)_*(i_j)_*\pi_1(V_j,x_j)\leq \pi_1^{sp}(X,\bar{x}) \leq \pi(\U,\bar{x})$,
  where $k_j:X_j\hookrightarrow X$, for $j=1,2$. Hence $k_*\pi_1(V_1\vee V_2,\bar{x})\leq \pi(\U,\bar{x})$ since $k_j\circ i_j=k\circ l_j$ for $j=1,2$ and $k_*(l_3)_*\pi_1({(U_1\cap V_1)\vee (U_2\cap V_2)},\bar{x})\sub \pi(\U,\bar{x})$ by  part (i) of Lemma \ref{l2}.
   For the converse, let  $x\in X_1$ and assume $q:X\lo X_1$ is a continuous map that is identity on $X_1$ and
    constant on $X_2$. Let $U$ be the open neighborhood of $x$ in $X$ where $i_*\pi_1(U,x)\leq\psp$,
     for $i:U\hookrightarrow X$. Then $V=q(U)$ is open in $X_1$. We claim that $j_*\pi_1(V,x)\leq\pi_1^{sp}(X_1,x)$,
      where $j:V\hookrightarrow X_1$. For if, let $\al:I\lo V$ be a loop and $\V$ be an open cover of $X_1$.
       Then $\U=q^{-1}(\V)$ is an open cover of $X$ and hence $[\al]\in\psp\leq\pi(\U,x)$.
       Therefore $q_*([\al])\in q_*(\pi(\U,x))=\pi(\V,x)$ which implies that $[\al]\in\pi_1^{sp}(X_1,x)$.$\hfill\Box$\ \\

In the following example we show that Theorem \ref{T3} does not hold for simply connected universal coverings.
\begin{example}\label{E1}
The cone on the Hawaiian Earring is a connected, locally path connected and semi-locally simply connected space and so has a simply connected covering space which is a universal covering space. But the double cone on the Hawaiian Earring does not have a simply connected universal covering space since it is not semi-locally simply connected but by Theorem 2.10 it has a categorical universal covering space which is a Spanier space.
\end{example}

\end{document}